\newtheorem{theorem}{Theorem}[section]
\newtheorem{lemma}[theorem]{Lemma}
\newtheorem{proposition}[theorem]{Proposition}
\theoremstyle{definition}
\newtheorem{remark}[theorem]{Remark}
\newtheorem{assumption}[theorem]{Assumption}
\numberwithin{equation}{section}
\numberwithin{subsection}{section}
\newcommand{\R}{\mathbb{R}} 
\DeclareMathOperator*{\esssup}{ess\,sup}
\title[Existence of curvature flow with forcing]{Existence of curvature flow with forcing \\ in a critical Sobolev space}
\author[Y. Liu]{Yuning Liu}
\address{NYU Shanghai, 567 Yangsi W road, Pudong, Shanghai 200126, China, and NYU-ECNU Institute of
Mathematical Sciences at NYU Shanghai, 3663 Zhongshan Road North, Shanghai, 200062, China}
\email{yl67@nyu.edu}
\author[Y. Tonegawa]{Yoshihiro Tonegawa}
\address{Department of Mathematics, Institute of Science Tokyo, 2-12-1 Ookayama, Meguro-ku, Tokyo 152-8551, Japan}
\email{tonegawa@math.titech.ac.jp}
\subjclass[2020]{53E10 (primary), 49Q15}
\thanks{ Yoshihiro Tonegawa acknowledges the support of the JSPS Grant-in-aid for scientific research 23H00085.
 Yuning Liu  acknowledges the support of NSF of China under Grant 12571224.}
\date{\today}
\begin{document}
\begin{abstract}
  Suppose that a closed 1-rectifiable set $\Gamma_0\subset\R^2$ of finite 1-dimensional Hausdorff measure and
  a vector field $u$ in a dimensionally critical Sobolev space are given. 
It is proved that, starting from $\Gamma_0$, there exists a non-trivial flow
of curves with the normal velocity given by the sum of the curvature and the given vector field $u$. The motion law is satisfied in the sense of 
  Brakke and the flow exists through singularities. 
\end{abstract}

\medskip
\maketitle

\section{Introduction}
The present paper studies a general existence problem related to the mean curvature flow with a critical driving force, the meaning of ``critical'' being explained in the next paragraph. 
Given an $n$-dimensional submanifold $\Gamma_0\subset\mathbb R^{n+1}$
and a vector field $u(x,t):\mathbb R^{n+1}\times[0,\infty)\rightarrow\mathbb R^{n+1}$, consider
the existence problem of a family $\Gamma(t)\subset\mathbb R^{n+1}$ 
such that $\Gamma(0)=\Gamma_0$ and $\Gamma(t)$ moves by the motion law of
\begin{equation}\label{prob}
    v(x,t)=h(x,t)+u(x,t)^\perp.
\end{equation}
Here, $v(x,t)$ is the normal velocity vector of $\Gamma(t)$
at $x$, $h(x,t)$ is the mean curvature vector of $\Gamma(t)$ at $x$, and $u(x,t)^\perp$ is the projection of $u(x,t)$ to $({\rm Tan}_x \Gamma(t))^{\perp}$. One may view the problem 
as the mean curvature flow with a given background flow field $u$ as a forcing term, and the short-time existence of 
unique classical solution $\Gamma(t)$ satisfying \eqref{prob} can be 
proved if $\Gamma_0$ and $u$ are sufficiently smooth. 
When the given vector $u$ is not smooth, it is interesting to study the minimal requirement for the existence of solution, where the meaning of solution to \eqref{prob} itself can be an essential part of the study. Though the motivation
of the problem with a low-regularity assumption on $u$ may be considered as purely theoretical, 
there is a coupled two-phase flow problem of Navier-Stokes
equation proposed as a sharp interface limit of the Navier-Stokes/Allen-Cahn equation (see \cite{MR4645674,liu2012two,LiuShen}
and the references therein) where the presence of interface $\Gamma(t)$ influences the incompressible flow $u$ itself
via the surface tension and $\Gamma(t)$ moves by the motion law \eqref{prob} (when $\sigma_2=1$ below). 
We briefly mention this problem, where
$u(x,t)$, $\Pi(x,t)$ (pressure) and $\Gamma(t)$ satisfy in the 
distributional sense
\begin{equation}\label{NSeq}
    \begin{array}{ll}
    u_t+u\cdot\nabla u =\Delta u-\nabla \Pi+\sigma_1 h\mathcal H^n\lfloor_{\Gamma(t)}& \mbox{ on }\mathbb R^{n+1}\times (0,\infty),\\
    \quad \nabla\cdot u=0 &\mbox{ on }\mathbb R^{n+1}\times (0,\infty),\\
    \quad v=\sigma_2 h+u^\perp&\mbox{ on }\Gamma(t) \mbox{ for $t$>0}.
    \end{array}
\end{equation}
Here $\sigma_1,\sigma_2$ are positive constants and $ h\,\mathcal H^n\lfloor_{\Gamma(t)}$
is an $n$-dimensional surface measure on $\Gamma(t)$ multiplied by
the mean curvature vector $h$
of $\Gamma(t)$. This system has a natural energy law of 
\begin{equation}
    \frac{d}{dt}\Big(\int_{\mathbb R^{n+1}}\frac{|u|^2}{2}\,dx
    +\sigma_1\mathcal H^{n}(\Gamma(t))\Big)=-\int_{\mathbb R^{n+1}}|\nabla u|^2\,dx-\sigma_1\sigma_2\int_{\Gamma(t)}|h|^2\,d\mathcal H^n
\end{equation}
so that the sum of the kinetic energy of the fluid and the $n$-dimensional surface area of interface $\Gamma(t)$ decreases in time. 
In the present paper, we emphasize that we take $u$ to be a {\it given datum}, on the other hand, leaving the application to the coupled problem for the future. 

The main result of the present paper is a global-in-time 
existence of solution for the initial value problem of \eqref{prob} in the case of $n=1$ under the assumption that
the given vector field
$u$ satisfies 
\begin{equation}\label{NS}
    u\in L^{\infty}_{loc}([0,\infty);L^2(\mathbb R^2))
    \cap L^2_{loc}([0,\infty);W^{1,2}(\mathbb R^2)).
\end{equation}
Here $W^{1,2}(\R^2)$ is the
  Sobolev space with $L^2$-integrable weak first derivatives. 
Moreover, for all $T>0$, writing 
\begin{equation}\label{NS1}
    \Cl[con]{c1}(u,T):=\Big(\esssup_{t\in[0,T]}\int_{\mathbb R^2}|u(x,t)|^2\,dx\Big)
    \Big(\int_{0}^T\int_{\mathbb R^2}|\nabla u(x,t)|^2\,dxdt\Big),
\end{equation}
the solution satisfies
\begin{equation}\label{NS2}
\mathcal H^1(\Gamma(T))\leq \mathcal H^1(\Gamma_0)\exp(\Cl[con]{MZ}\, \Cr{c1}(u,T))
\end{equation}
for all $T>0$ with an absolute constant $\Cr{MZ}>0$. Here
$\mathcal H^1(A)$ for $A\subset\mathbb R^2$ is the one-dimensional Hausdorff measure of $A$. The solution $\Gamma(t)$
is guaranteed to be non-trivial at least for some positive time interval and may go through various occurrences of singularities. The functional space 
\eqref{NS} 
is a natural class to solve 
the above mentioned problem \eqref{NSeq}, but
more interestingly, the quantities appearing in \eqref{NS1} 
are invariant under the natural parabolic scaling (when $T=\infty$ and note that $u$ has the same scaling as $x/t$, see \cite{Caffarelli:1982aa}). 
As the most relevant results, the second-named author and collaborators showed (\cite{LiuSato2010,takasao2016existence}) such existence
results for general $n\geq 1$ if the given vector field $u$ satisfies 
\begin{equation}\label{subc}
    \int_0^T\left(\int_{\mathbb R^{n+1}}|u(x,t)|^p+|\nabla u(x,t)|^p\,dx\right)^{q/p}dt<\infty\mbox{ with } p>\frac{(n+1)q}{2(q-1)},\,q>2
\end{equation}
and $p\geq \frac43$ in addition if $n=1$. These conditions on $p,q$ imply that, under the
parabolic scaling, the effect of $u$ in small scale
becomes negligible and $u$ may be regarded as a perturbation to the mean curvature flow. More precisely, for
$\lambda>0$, let $\tilde x= \lambda^{-1} x$, $\tilde t=\lambda^{-2} t$
and $\tilde u(\tilde x,\tilde t)=\lambda u(x,t)$ (which should behave like $\tilde x/\tilde t$). Then the
change of variables shows
\begin{equation}
\Big(\int\Big(\int |\nabla u(x,t)|^p\,dx\Big)^{\frac{q}{p}}
    dt\Big)^{\frac{1}{q}}=\Big(\int
    \Big(\int|\nabla \tilde u(\tilde x,\tilde t)|^p\,d\tilde x\Big)^{\frac{q}{p}} d\tilde t\Big)^{\frac{1}{q}}\lambda^{-2+\frac{n+1}{p}+\frac{2}{q}},
\end{equation}
and the condition on the exponents $p,q$ of
\eqref{subc} means $-2+(n+1)/p+2/q<0$. This implies that the effect of $\tilde u$ becomes smaller as one looks at a finer scale 
$\lambda\approx 0$.
For this reason, it is fitting to say that the problem \eqref{prob} with \eqref{subc} is subcritical.  In contrast, the case of $n+1=p=q=2$ 
is a critical case which just 
fails to 
satisfy \eqref{subc}. 
The solution in the present
paper has the feature that the effect of curvature and that of forcing are analytically comparable in strength since the parabolic 
change of variables, which is invariant under the mean curvature flow, is also invariant for the 
norms of $u$ considered here. There is no known general existence theorem for the ``supercritical'' case, where 
at least one of the
inequalities in \eqref{subc} is 
$<$. We expect that there is no 
solution of the similar type in the sense
that the forcing field is too turbulent for the evolving surface to
even retain the finiteness of the
surface measure in general.  

We next discuss the technical difference between the critical and subcritical cases of the problem.
One important point for both cases is that, unlike the mean curvature flow ($u=0$) which simply reduces the surface area $\mathcal H^n(\Gamma(t))$ in time, $\mathcal H^n(\Gamma(t))$ may blow up immediately with irregular $u$.
The strategies of having a good bound for $\mathcal H^n(\Gamma(t))$
can be explained as follows.
If $u$
and $\Gamma(t)$ are smooth and 
\eqref{prob} is satisfied, then by the
first variation formula of the surface measure,
we have
\begin{equation}
    \frac{d}{dt}\mathcal H^n(\Gamma(t))=-\int_{\Gamma(t)}v\cdot h\,d\mathcal H^n=-\int_{\Gamma(t)}(|h|^2+h\cdot u^\perp)\,d\mathcal H^n.
\end{equation}
We may proceed to use $-h\cdot u^\perp\leq |h|^2/2+|u|^2/2$, so that we would have an apriori estimate for $\mathcal H^n(\Gamma(t))$ in term of $\mathcal H^n(\Gamma(0))$ if we have a 
control of the term
\begin{equation}
\int_0^T\int_{\Gamma(t)}|u|^2\,d\mathcal H^n dt.
\end{equation}
Note that $|u|^2$ is integrated with respect to the surface measure, so that we need to control the $L^2$-norm of the {\it trace} of Sobolev function $u$ on $\Gamma(t)$. For this purpose, we take advantage of the Meyers-Ziemer inequality \cite{meyers1977integral} (see Theorem \ref{MZ}), namely, if the 
upper density ratio 
\begin{equation}\label{updensity}
    \mathfrak{D}(t)=\sup_{B_r(x)\subset\mathbb R^{n+1}}\frac{\mathcal H^n(\Gamma(t))}{r^n}
\end{equation}
is bounded, then the Meyers-Ziemer
inequality gives
\begin{equation}
\int_{\Gamma(t)}|u|^2\,d\mathcal H^n\leq c(n)\mathfrak{D}(t)\int_{\mathbb R^{n+1}}|\nabla |u|^2|\,dx,
\end{equation}
and if the right-hand side is 
appropriately bounded, we would be able to obtain the apriori bound for
$\mathcal H^n(\Gamma(t))$. For the subcritical case of \eqref{subc} (as in \cite{takasao2016existence}), 
it turned out that the local $L^\infty$ bound of $\mathfrak{D}(t)$ in time
can be controlled using an analogue of Huisken's monotonicity formula \cite{Huisken_mono}. 
For the present critical case,
a similar strategy does not work since the contribution coming from $u$ cannot
be controlled in the computation of Huisken's monotonicity formula. Instead, we utilize a monotonicity formula
for one-dimensional general varifold
which gives
\begin{equation} \label{Dtest}
    \mathfrak{D}(t)\leq \int_{\Gamma(t)}|h|\,d\mathcal H^1
\end{equation}
in essence (see Proposition \ref{length} for the precise statement) and which allows us to
close the above estimate. In this sense, the strategy is particular to one-dimension and is different from the subcritical case. In the 
end, we only obtain
\begin{equation}
    \int_0^T \mathfrak{D}(t)^2\,dt\leq 
    \sup_{t\in[0,T]}\mathcal H^1(\Gamma(t))\int_0^T\int_{\Gamma(t)}|h|^2\,d\mathcal H^1
\end{equation}
and not necessarily the local $L^\infty$ bound
on $\mathfrak{D}(t)$ as in the subcritical case.

We also utilize 
global-in-time existence results \cite{kim2017mean,Kim-Tone2,ST-canonical}
for the mean curvature flow, which can be modified for the flow with a smooth forcing term, and which prove the global-in-time existence of approximate flows. 
We show a priori estimates 
on the length of the curve, $L^2$-norm of the curvature and
$L^2$-norm of the vector field $u$ (as a trace)
depending only on $\mathcal H^1(\Gamma_0)$ and $\Cr{c1}(u,T)$. It is 
proved that the limit of the
approximate flow satisfies a set of
desired properties
similar to the results in \cite{takasao2016existence}.
As in \cite{kim2017mean,Kim-Tone2,ST-canonical}, the 
setting of the problem is for
that of general multi-phase and the 
flow typically keeps triple junctions as it evolves. 

The organization of the paper is as follows. In the next Section 2, notation and main results are stated. In Section 3, key a priori estimates are presented.
Section 4 states the existence
results of the flow in the case
of smooth forcing term and for general dimensions, and whose proof is relegated to the Appendix. Section 5 
analyzes the limiting process 
and proves the 
main theorem, and in Section 6, we mention a few additional comments. 

\section{Notation and main theorem}
\subsection{Notation}
Though the main result of the present paper is for $n=1$, here we write the notation for general dimensions because of Section \ref{erff}. We define  $G_n(\R^{n+1}):=\R^{n+1}\times \mathbf{G}(n+1, n)$
   where $\mathbf{G}(n+1, n)$ is   the space of $n$-dimensional subspaces of $\R^{n+1}$. The
   subspace $S$ is often identified with the $(n+1)\times (n+1)$ matrix representing the 
   orthogonal projection $\R^{n+1}\rightarrow S$. 
   A general $n$-varifold is a Radon measure on $G_n(\R^{n+1})$ (see \cite{allard1972first,simon1983lectures} for more comprehensive presentation). The set of all general $n$-varifolds is denoted by $\mathbf{V}_n(\R^{n+1})$. For $V \in \mathbf{V}_n(\R^{n+1})$, let $\|V\|$ be the weight measure of $V$, namely,
$$
\|V\|(\phi):=\int_{G_n(\R^{n+1})} \phi(x)\, d V(x, S)
$$
for $\phi \in C_c(\R^{n+1})$. 
A set $M\subset\mathbb R^{n+1}$ is 
said to be {\it countably $n$-rectifiable} if there exists 
a countable number of Lipschitz functions $F_j:\mathbb R^n\rightarrow\mathbb R^{n+1}$ ($j\in\mathbb N$) such that $\mathcal H^n(M\setminus\cup_{j=1}^\infty F_j(\mathbb R^n))=0$ (see \cite{simon1983lectures} for the other properties). $V \in \mathbf{V}_n(\R^{n+1})$ is called  rectifiable if there exist an $\mathcal{H}^n$-measurable countably $n$-rectifiable set $M \subset \R^{n+1}$ and a locally $\mathcal{H}^n$-integrable function $\theta$ defined on $M$ such that
\begin{equation}\label{defrecti1varifold}
    V(\phi)=\int_M \phi\left(x, \operatorname{Tan}_x M\right) \theta(x)\, d \mathcal{H}^n(x)
\end{equation}
for $\phi \in C_c\left(G_n(\R^{n+1})\right)$. Here $\operatorname{Tan}_x M$ is the approximate tangent space of $M$ at $x$ which exists $\mathcal{H}^n$ a.e.~on $M$. Any rectifiable $n$-varifold is uniquely determined by its weight measure $\|V\|=\theta\, \mathcal{H}^{n} \lfloor_M$ through the formula \eqref{defrecti1varifold}. For this reason, we naturally say that  a Radon measure $\mu$ is rectifiable when one can associate a rectifiable varifold $V$ such that $\|V\|=\mu$. The approximate tangent
space $\operatorname{Tan}_x M$ is denoted in this case by 
$\operatorname{Tan}_x\mu$ or
$\operatorname{Tan}_x \|V\|$ without fear
of confusion. We let ${\bf var}(M,\theta)$ denote the varifold defined as in \eqref{defrecti1varifold}.
If $\theta \in \mathbb{N}$ for $\mathcal{H}^n$ a.e. on $M$, we say $V$ is integral. The set of all integral $n$-varifolds is denoted by $\mathbf{IV}_n(\R^{n+1})$. If $\theta=1$ for $\mathcal{H}^n$ a.e. on $M$, we say that  $V$ is a unit density $n$-varifold.

For $V \in \mathbf{V}_n(\R^{n+1})$ let $\delta V$ be the first variation of $V$, namely,
\begin{equation}\label{tonegawakimequ2.2}
    \delta V(g):=\int_{G_n(\R^{n+1})} \nabla g(x) \cdot S\, d V(x, S)\quad \text{ for }g \in C_c^1 (\mathbb{R}^{n+1} ; \mathbb{R}^{n+1} ).
\end{equation}
Here $\nabla g\cdot S:=\sum_{i,j=1}^{n+1} g^i_{x_j} S_{ij}$ which is the divergence of $g$ on $S$. If 
\begin{equation}
    \sup_{g\in C_c^1(\R^{n+1};\R^{n+1}), \,|g|\leq 1}\delta V(g)<\infty,
\end{equation}
we say that the first variation
is bounded, and in this 
case, 
 let $\|\delta V\|$ be the total variation measure. If $\|\delta V\|$ is absolutely continuous with respect to $\|V\|$, by the Radon-Nikodym theorem, we have a $\|V\|$-measurable vector field $h(x,V)$ such that 
\begin{equation}\label{tonegawakimequ2.2a}
    \delta V(g)=-\int_{\mathbb{R}^{n+1}} g(x) \cdot h(x,V) \,d\|V\|(x).
\end{equation}

The vector field $h(x,V)$ (often written as $h(V)$) is called the generalized mean curvature of $V$
(or curvature in the case of $n=1$).  For any $V \in \mathbf{IV}_n(\R^{n+1})$ with bounded first variation, Brakke's perpendicularity theorem for  generalized mean curvature \cite[Chapter 5]{brakke1978motion} says that 
\begin{equation}
    \label{tonegawakimequ2.3}
    S^{\perp}(h(x,V))=h(x,V)\quad \text{ for  $V$ a.e. }(x, S) \in G_n(\R^{n+1}),
\end{equation}
i.e.~$h(x,V)$ is perpendicular to the
approximate tangent space for $\|V\|$ a.e.~$x\in\R^{n+1}$.
For $V\in {\bf IV}_n(\R^{n+1})$ and 
$\|V\|$-integrable function $u$,
we use the notation
\begin{equation}
u^\perp(x):=(\operatorname{Tan}_x \|V\|)^\perp(u(x)),
\end{equation}
which is well-defined for $\|V\|$ a.e.~$x\in\R^{n+1}$. 

For a set of finite perimeter $E\subset\R^{n+1}$, let $\partial^* E$ be the reduced boundary of $E$. Let 
$\nabla \chi_E$ be
the distributional derivative of the 
characteristic function of $E$ and
let $\|\nabla \chi_{E}\|$ be the
total variation of $\nabla\chi_E$.
By the well-known structure theorem
for sets of finite perimeter (see \cite{evans2015measure}), $\mathcal H^n\lfloor_{\partial^* E}=\|\nabla\chi_E\|$. We also
write $|E|$ for the Lebesgue
measure of $E$. 

\subsection{Measure-theoretic formulation of the normal velocity}\label{mtfnv}
In this subsection we review what
it means for a one-parameter family of varifolds $\{V_t\}_{t\geq 0}$ to be a solution of
\eqref{prob}. To motivate the definition, let us assume first 
that we have a smooth family of 
$n$-dimensional surfaces without boundary $\Gamma(t)\subset\R^{n+1}$ moving by the 
normal velocity $v=v(x,t)$. For any
test function $\phi\in C^1_c(\mathbb R^{n+1}\times[0,\infty))$, we can prove that
\begin{equation}
    \frac{d}{dt}\int_{\Gamma(t)}\phi\,d\mathcal H^n(x)=\int_{\Gamma(t)} (\nabla\phi-\phi\,h)\cdot v+\frac{\partial\phi}{\partial t}\,d\mathcal H^n
\end{equation}
is satisfied. Conversely, if $\tilde v=\tilde v(x,t)$ is a smooth normal vector field defined on $\Gamma(t)$ which satisfies
\begin{equation}\label{smineqbra}
  \frac{d}{dt}\int_{\Gamma(t)}\phi\,d\mathcal H^n(x)\leq \int_{\Gamma(t)} (\nabla\phi-\phi\,h)\cdot \tilde v+\frac{\partial\phi}{\partial t}\,d\mathcal H^n   
\end{equation}
for all {\it non-negative} 
$\phi\in C_c^1(\mathbb R^{n+1}\times[0,\infty))$, then 
one can prove that $\tilde v$ has to
be the normal velocity vector, i.e., $\tilde v=v$ (see \cite[Section 2.1]{tonegawa2019brakke} for the
proof). Thus \eqref{smineqbra}
can be used to characterize the normal velocity vector, and
by considering the integrated version, we have the following 
\begin{proposition}
    Suppose $\{\Gamma(t)\}_{t\in[0,T]}$ is a smooth family of $n$-dimensional surfaces without  boundary. Then a normal vector
    $\tilde v$ defined on $\Gamma(t)$ coincides with the normal velocity vector $v$ if and only if
    the following holds. For any
    $0\leq t_1<t_1\leq T$ and for 
    any non-negative $\phi\in C_c^1(\mathbb R^{n+1}\times[0,T])$, 
    \begin{equation}\label{heuineq}
        \int_{\Gamma(t_2)}\phi(x,t_2)\,d\mathcal H^n(x)-\int_{\Gamma(t_1)}\phi(x,t_1)\,d\mathcal H^n(x)
        \leq\int_{t_1}^{t_2}\int_{\Gamma(t)}(\nabla\phi-\phi\,h)\cdot\tilde v+\frac{\partial\phi}{\partial t}\,d\mathcal H^n dt
    \end{equation}
    is satisfied. 
\end{proposition}
Motivated by this, we may define the normal velocity of a 
family of varifolds $\{V_t\}_{t\in[0,T]}$. First, as a 
generalization of the $n$-dimensional surface, we may require $V_t\in {\bf IV}_n(\mathbb R^{n+1})$ for 
a.e.~$t\in[0,T]$. Since we want to 
consider $\tilde v=h+u^\perp$, we require 
that $V_t$ has the generalized 
mean curvature vector $h(x,V_t)$ 
for a.e.~$t$, which is also $L^2$-integrable, i.e., $\int_0^T\int_{\mathbb R^{n+1}}|h(x,V_t)|^2\,d\|V_t\|dt<\infty$. Under these additional conditions and replacing $\tilde v$
in \eqref{heuineq} by $h+u^\perp$, 
$\mathcal H^n\lfloor_{\Gamma(t)}$ by $\|V_t\|$, respectively, we say that $\{V_t\}_{t\in[0,T]}$ is a solution of \eqref{prob} if 
\eqref{heuineq} holds true for
any $0\leq t_1<t_2\leq T$ and for any
non-negative $\phi\in C_c^1(\mathbb R^{n+1}\times[0,T])$.
See (V3) (which shows, in particular, that $V_t\in {\bf IV}_1(\mathbb R^2)$ for a.e.~$t$), (V4), (V5) and (V7) in the next subsection. 

\subsection{Main results}
We first state the assumptions for the
main theorem.
\begin{assumption}\label{goodas}
We consider the following (A1)-(A4):
    \begin{itemize}
    \item[(A1)] $N$ is an integer with $N\geq 2$.
    \item[(A2)] $E_{0,1},\ldots,E_{0,N}$ are non-empty open sets in $\mathbb R^{2}$ which
    are mutually disjoint.
    \item[(A3)] $\Gamma_0:=\mathbb R^{2}\setminus
    \cup_{i=1}^N E_{0,i}$ is countably 
    $1$-rectifiable and $\mathcal H^1(\Gamma_0)<\infty$. 
    \item[(A4)] $u$
    is a given vector field 
    as in \eqref{NS} and 
    $\Cr{c1}(u,T)$ is as in \eqref{NS1}.
\end{itemize}  
\end{assumption}
Here, the simplest case is $N=2$, which
may be considered as a ``two-phase'' case. In general, $N$ 
corresponds to the number of ``phases'' $E_{0,1},\ldots, E_{0,N}$ and they partition the whole $\R^2$. Note that the connectedness of each of $E_{0,1},\ldots,E_{0,N}$ is not assumed. Since $\Gamma_0$ cannot have interior
due to $\mathcal H^1(\Gamma_0)<\infty$, 
we have
$\Gamma_0=\cup_{i=1}^N\partial E_{0,i}$ and the initial datum 
$\Gamma_0$ for the flow is given as the topological
boundary of the partition. 

We next state the main existence theorem as
Theorem \ref{mainth1}, and we give an itemized
explanation in Section \ref{domr}.

\begin{theorem}\label{mainth1}
Assume {\rm (A1)-(A4)}. Then there exist a family $\{V_t\}_{t\geq 0}\subset{\bf V}_1(\mathbb R^{2})$ and a family of
sets of finite perimeter $\{E_i(t)\}_{t\geq 0}$ 
in $\R^{2}$ for each $i=1,\ldots, N$ with the following:
\newline
Properties for $V_t$:
\begin{itemize}
\item[(V1)] $\|V_0\|=\mathcal H^1\lfloor_{\Gamma_0}$.
\item[(V2)] If $\mathcal H^1(\cup_{i=1}^N (\partial E_{0,i}\setminus \partial^* E_{0,i}))=0$,
then $\lim_{t\rightarrow 0+}\|V_t\|=\|V_0\|$.  
    \item[(V3)] For a.e.~$t\geq 0$, for $t$
    with $\|V_t\|(\mathbb R^2)>0$, there 
exist a finite number (denoted by $P(t)\in\mathbb N$) of connected embedded $W^{2,2}$ curves $\ell_1(t),\ldots,\ell_{P(t)}(t)$,
each of them being
either a
closed curve or a curve  with two endpoints,
such that 
\begin{equation}\label{curvar}
V_t=\sum_{i=1}^{P(t)}{\bf var}(\ell_i(t),1).
\end{equation}
Here, it is not ruled out that 
some $\ell_i(t)$ 
and $\ell_{j}(t)$ with $i\neq j$ 
may be identical. 
Moreover, there is a finite set of points where the endpoints of curves $\ell_1(t),\ldots,\ell_{P(t)}(t)$ meet at angles of 
either 0, 60 or 120 degrees.
If $N=2$, each curve is an embedded closed curve which may intersect other curves only tangentially. 
\begin{figure}  [htp]
\centering
\includegraphics[scale=1.0] {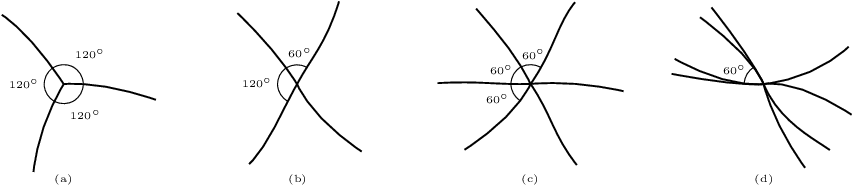}
\caption{}
\end{figure}
    \item[(V4)] For a.e.~$t\geq 0$, $\delta V_t$
    is bounded and absolutely continuous 
    with respect to $\|V_t\|$
    (thus $h(x,V_t)$ exists).
    \item[(V5)] There exists an absolute
    constant $\Cr{MZ}>0$ (cf. Theorem \ref{MZ}) such that for all $T>0$, 
    and writing $\Cr{c1}(u,T)$ (cf. \eqref{NS1}) as $\Cr{c1}$, 
    \begin{equation}\label{h2}
    \int_0^T\int_{\mathbb R^{2}}|h(x,V_t)|^2\,d\|V_t\|dt
    \leq 4\mathcal H^1(\Gamma_0)(1+\Cr{MZ}
    \Cr{c1}\exp(\Cr{MZ} \Cr{c1})),
    \end{equation}
      \begin{equation}\label{gaa1main}
\int_{0}^{T}\int_{\R^2}|u|^2\,d\|V_t\|dt \leq 4\sqrt{\Cr{c1}\Cr{MZ}}\mathcal H^1(\Gamma_0)\sqrt{1+\Cr{MZ}\Cr{c1}\exp(\Cr{MZ}\Cr{c1})}\exp(\Cr{MZ}\Cr{c1}/2).
\end{equation}
    \item[(V6)] For all $T>0$, 
\begin{equation}
\sup_{t\in[0,T]}\|V_t\|(\mathbb R^{2})\leq \mathcal H^1(\Gamma_0)\exp\left(\Cr{MZ}\,\Cr{c1}(u,T)\right).
    \end{equation}
    \item[(V7)] 
    For all $0\leq t_1<t_2<\infty$ and $\phi\in C^1_c(\mathbb R^{2}\times[0,\infty))$ with
    $\phi\geq 0$, it holds
    \begin{equation}\label{bra}
    \begin{split}
    &\|V_{t_2}\|(\phi(\cdot,t_2))-\|V_{t_1}\|(\phi(\cdot,t_1)) \leq \int_{t_1}^{t_2}\int_{\mathbb R^{2}}
   \frac{\partial\phi}{\partial t}(x,t)
    \,d\|V_t\|(x)dt\\
    & +\int_{t_1}^{t_2}\int_{\mathbb R^{2}}
    \big\{\nabla\phi(x,t)-\phi(x,t)h(x,V_t)\big\}
    \cdot \big\{h(x,V_t)+u^\perp(x,t)\big\}
    \,d\|V_t\|(x)dt.
    \end{split}
    \end{equation}
\end{itemize}
Properties of $E_i(t)$:
\begin{itemize}
\item[(E1)] $E_i(0)=E_{0,i}$ for $i=1,\ldots,N$.
\item[(E2)] $E_1(t),\ldots,E_N(t)$ are mutually disjoint sets of
finite perimeter for all $t\geq 0$.
\item[(E3)] For all $t\geq 0$, $\|V_t\|\geq \|
\nabla
\chi_{E_i(t)}\|$ for each $i=1,\ldots, N$
 and 
 $2\|V_t\|\geq \sum_{i=1}^N\|\nabla\chi_{E_i(t)}\|$.
 \item[(E4)] $S(i):=\{(x,t): x\in E_i(t), t\geq 0\}$ is a set of 
 finite perimeter in $\mathbb R^{2}\times[0,T)$ 
for all $T>0$ and $i=1,\ldots,N$.
\item[(E5)] There exists a constant $\Cl[con]{vol2}=\Cr{vol2}(\Cr{c1}(u,T),\mathcal H^1(\Gamma_0))$ such that, for each $i=1,\ldots,N$
and $0\leq t_1<t_2\leq T$, 
\begin{equation}
    |E_i(t_2)\triangle E_i(t_1)|
    \leq \Cr{vol2}(t_2-t_1)^{\frac12}.
\end{equation}
\item[(E6)] There exists a
Radon measure $\beta$ on $[0,\infty)$ with $\beta([0,T])\leq \Cl[con]{vol}=\Cr{vol}(\Cr{c1}(u,T),\mathcal H^1(\Gamma_0))$ 
such that, 
for any $0\leq t_1<t_2\leq T$,  
$\phi\in C_c(\mathbb R^{2}\times[0,\infty))$
and $i=1,\ldots,N$, we have
\begin{equation}\label{gvel2}
\begin{split}
\Big|\int_{E_i(t_2)}&\phi(x,t_2)\,dx-
\int_{E_i(t_1)}\phi(x,t_1)\,dx-\int_{t_1}^{t_2}\int_{E_i(t)} 
\frac{\partial\phi}{\partial t}\,dxdt
\Big| \\ &
    \leq (\beta([t_1,t_2]))^{1/2}\left(\int_{t_1}^{t_2}\int_{\R^2}\phi^2\,d\|V_t\|dt\right)^{1/2}.
    \end{split}
\end{equation}
\item[(E7)] Let $\theta_t(x)$ be the density of $\|V_t\|$.
Then for a.e.~$t\geq 0$ and $\mathcal H^1$ a.e.~$x\in \cup_{i=1}^{P(t)}\ell_i(t)$
(cf.~\eqref{curvar})
\begin{itemize}
\item[(1)] if $N\geq 3$, then, $\theta_t(x)=1$ implies
$x\in \cup_{i=1}^N\partial^* E_i(t)$.
\item[(2)] If $N=2$, 
\begin{equation}
\theta_t(x)=\left\{\begin{array}{ll}\mbox{odd integer for } x\in\partial^* E_1(t)(=\partial^* E_2(t)), \\
\mbox{even integer for }
x\in \cup_{i=1}^{P(t)}\ell_i(t)\setminus \partial^* E_1(t).
\end{array}
\right.
\end{equation}
\end{itemize}
\item[(E8)]
If $\{V_t\}_{0\leq t\leq T}$ is
of unit density for a.e.~$t\in[0,T]$,
then $V_t={\bf var}(\cup_{i=1}^N
\partial^* E_i(t),1)$ for a.e.~$t\in[0,T]$.
\end{itemize}
\end{theorem}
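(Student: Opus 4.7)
The natural approach is to construct $V_t$ and $E_i(t)$ as weak limits of approximate flows driven by smooth mollifications of $u$, with all uniform bounds derived from a single coupled energy inequality that closes via the monotonicity formula for $1$-varifolds with $L^2$ curvature. Concretely: mollify $u$ in space-time to $u_\varepsilon \in C^\infty$ with $\Cr{c1}(u_\varepsilon,T)\leq \Cr{c1}(u,T)+o(1)$ and $u_\varepsilon\to u$ in the norms of \eqref{NS}, and approximate the initial partition $\{E_{0,i}\}$ by smooth partitions $\{E_{0,i}^\varepsilon\}$ with $\mathcal{H}^1(\Gamma_0^\varepsilon)\to \mathcal{H}^1(\Gamma_0)$. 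The smooth-forcing construction of Section~4 (an adaptation of \cite{kim2017mean,Kim-Tone2,ST-canonical}) then supplies global-in-time approximate flows $\{V_t^\varepsilon\}$ and partitions $\{E_i^\varepsilon(t)\}$ obeying counterparts of \eqref{bra} and \eqref{gvel2} with forcing $u_\varepsilon$.

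The heart of the proof is a uniform-in-$\varepsilon$ energy estimate. Testing the approximate Brakke inequality against $\phi\equiv 1$ (after a spatial cut-off that is then sent to infinity, legitimate thanks to $\mathcal{H}^1(\Gamma_0)<\infty$) and applying Young's inequality to the cross term $h\cdot u_\varepsilon^\perp$ produces
\begin{equation*}
\|V_T^\varepsilon\|(\R^2) + \tfrac12\int_0^T\!\!\int_{\R^2}|h(\cdot,V_t^\varepsilon)|^2\,d\|V_t^\varepsilon\|\,dt \leq \mathcal{H}^1(\Gamma_0^\varepsilon) + \tfrac12\int_0^T\!\!\int_{\R^2}|u_\varepsilon|^2\,d\|V_t^\varepsilon\|\,dt.
\end{equation*}
The remaining trace term is the critical quantity: a Meyers--Ziemer-type inequality, to be proved in Theorem~\ref{MZ}, will bound it at each fixed $t$ by a universal constant $\Cr{MZ}$ times $\|u_\varepsilon(\cdot,t)\|_{L^2}\|\nabla u_\varepsilon(\cdot,t)\|_{L^2}\|V_t^\varepsilon\|(\R^2)$, the varifold factor entering because the $1$-dimensional monotonicity formula with $L^2$ curvature controls $\|V_t^\varepsilon\|(B_r(x))/r$ by the total mass plus $\int|h|^2$. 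Substituting and integrating in time yields a Gronwall-type differential inequality $\tfrac{d}{dt}\|V_t^\varepsilon\|(\R^2)\leq \tfrac12\Cr{MZ}^2\,\|u_\varepsilon\|_{L^2}\|\nabla u_\varepsilon\|_{L^2}\,\|V_t^\varepsilon\|(\R^2)$, which integrates to (V6); the curvature bound \eqref{h2} and the trace bound \eqref{gaa1main} then follow from the energy identity above together with Cauchy--Schwarz against $\sqrt{\Cr{c1}(u,T)}$.

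With these uniform bounds in place, Allard's compactness for integral varifolds and BV compactness for the phases yield, along a diagonal subsequence, limits $V_t$ and $E_i(t)$ satisfying (E1)--(E6); in particular the $1/2$-Hölder-in-time bound (E5) on the phases follows from \eqref{gvel2} at the $\varepsilon$-level combined with the uniform length and curvature bounds. Lower semicontinuity of $\int|h|^2$ and strong $L^2$-convergence of $u_\varepsilon$ paired against the weak convergence of the vector measure $h(\cdot,V_t^\varepsilon)\|V_t^\varepsilon\|$ transfer \eqref{bra} to the limit in the manner of \cite{takasao2016existence}. The structural properties---integrality of $V_t$, the decomposition \eqref{curvar} into embedded $W^{2,2}$ arcs meeting at angles $0^\circ$, $60^\circ$, or $120^\circ$ in (V3), the perpendicularity-based conclusions (V4), (V7), (E7), and the unit-density reduction (E8)---then follow from Brakke's perpendicularity theorem \eqref{tonegawakimequ2.3} and the planar structure theorem for integral $1$-varifolds with $L^2$ curvature, exactly as in \cite{kim2017mean,Kim-Tone2,ST-canonical}. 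The hard part will be the critical trace estimate: unlike the subcritical regime \eqref{subc}, where Huisken's monotonicity turns the forcing into a genuine perturbation of the mean curvature flow, here forcing and curvature share the same parabolic scaling, so one must extract a Meyers--Ziemer constant that is both \emph{absolute} (independent of $\Gamma_0$ and $u$) and \emph{compatible} with the monotonicity defect; without this sharp form, Gronwall would fail to close and every downstream estimate would degrade.
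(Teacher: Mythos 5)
Your plan follows essentially the same route as the paper: mollify $u$, invoke the smooth-forcing existence theorem, close a Gronwall argument through the Meyers--Ziemer inequality combined with the monotonicity-formula control of the density ratio, and pass to the limit via Allard and BV compactness. However, the critical trace estimate does not close in the form you state it. Meyers--Ziemer gives $\int|u_\varepsilon|^2\,d\|V_t^\varepsilon\|\leq \Cr{MZ}\,\mathfrak{D}(\|V_t^\varepsilon\|)\int|\nabla|u_\varepsilon|^2|\,dx$, and the monotonicity formula gives $\mathfrak{D}(\|V_t^\varepsilon\|)\leq(\|V_t^\varepsilon\|(\R^2))^{1/2}\bigl(\int|h|^2\,d\|V_t^\varepsilon\|\bigr)^{1/2}$; the curvature factor cannot be dropped (a circle of radius $r$ has $\mathfrak{D}\approx 2\pi$ while its mass tends to $0$ with $r$), so the trace term is \emph{not} bounded by $\Cr{MZ}\|u_\varepsilon\|_{L^2}\|\nabla u_\varepsilon\|_{L^2}\|V_t^\varepsilon\|(\R^2)$ alone, and your alternative reading ``$\mathfrak{D}$ is controlled by total mass plus $\int|h|^2$'' leaves a term $\|u\|_{L^2}\|\nabla u\|_{L^2}\int|h|^2\,d\|V_t^\varepsilon\|$ whose prefactor is not small, hence not absorbable by the dissipation. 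The correct closure keeps the product form and applies Young's inequality with weight $\tfrac12$ on $\int|h|^2$, paying $2\Cr{MZ}^2\|u\|_{L^2}^2\|\nabla u\|_{L^2}^2\|V_t\|(\R^2)$ on the other side; this is why the Gronwall rate is $\|u\|_{L^2}^2\|\nabla u\|_{L^2}^2$ (time-integrable precisely by \eqref{NS} and H\"{o}lder) and why (V6) carries $\exp(\Cr{MZ}^2\Cr{c1})$ rather than the $\exp\bigl(c\sqrt{T\,\Cr{c1}}\bigr)$ your differential inequality would produce.

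Two further points. First, Allard compactness applied at each fixed $t$ yields $t$-dependent subsequences; to obtain one coherent family $\{V_t\}$ defined for \emph{all} $t$ (needed for \eqref{bra} between arbitrary $t_1<t_2$, for (V2), and for (E3)) the paper first extracts a single subsequence along which the masses converge at every time, exploiting that the mass plus accumulated curvature dissipation minus accumulated forcing energy is non-increasing in $t$, and only then identifies the a.e.-defined integral varifold; your plan should make this step explicit. Second, (V3) is not a consequence of a general structure theorem for planar integral $1$-varifolds with $L^2$ curvature --- a transverse crossing of two lines has zero curvature and is not of the stated form --- but is inherited from the $120^\circ$-junction structure of the approximating networks through a blow-up/contradiction argument, so your deferral to \cite{kim2017mean,Kim-Tone2} is the right move even though the ``structure theorem'' framing is misleading. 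Finally, there is no need to smooth the initial partition: the smooth-forcing theorem accepts the rectifiable $\Gamma_0$ directly, and approximating it would jeopardize the exact identity (V1).
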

\subsection{Discussion on the main results}\label{domr}
We give comments on the stated results. 
\begin{itemize}
\item[(V1)] This specifies that the initial condition is satisfied.
\item[(V2)] In general, $\partial^* E_{0,i}\subset\partial E_{0,i}$, that is,  the measure-theoretic boundary is contained in the topological boundary, but they are not equal when there is a
piece of topological boundary ``within the set'', to put it 
heuristically. Such a set typically vanishes 
instantaneously under our construction, so that we cannot expect the stated continuity in general. On the other hand, if the difference has null measure as assumed in (V2), then $\|V_t\|$ is continuous
at $t=0$. 
\item[(V3)] This states that, for a.e.~$t\geq 0$ and for $t$ with $\|V_t\|(\mathbb R^2)>0$, the flow 
looks like a network of $W^{2,2}$ curves with triple 
junctions with 120-degree angles or the 
degenerate triple junctions (such as ``infinitesimal hexagons'' as in Figure 1(c)).
This regularity is a special feature of the 
construction of \cite{kim2017mean,Kim-Tone2}.
For the case of $N=2$ (``two-phase''), there is no
junction and each curve is an embedded and closed loop, 
and they may only intersect tangentially. 
We know the local description of the support of $\|V_t\|$, thus the number of curves is
finite, which we denote  as $P(t)$. On the other hand, we do not know
any further property of $P(t)$.   
\item[(V4)] This property in particular implies that the sum of the unit conormals of the curves
at the junction (counting multiplicities) must be $0$.
\item[(V5,6)]
These give control of the relevant
integrals in terms of the initial length and 
norm of $u$ in an explicit manner. 
\item[(V7)]
The inequality \eqref{bra} is a 
characterization of the normal velocity being equal 
to $h+u^\perp$ as described in Section \ref{mtfnv} in a generalized sense of Brakke.
Unlike the subcritical case of \cite{takasao2016existence} where the regularity
theorem of \cite{kasai2014general} is applicable,
no regularity theorem of a similar kind is known
for this flow due to the criticality. 
\end{itemize}
We next discuss the results on $E_i(t)$. 
\begin{itemize}
\item[(E1)] Each $E_i(t)$ starts with the given
initial open set $E_{0,i}$. As already stated, $E_{0,i}$ may not be connected in general.
\item[(E2)] 
At each time $t$, $E_1(t),\ldots,E_N(t)$ give
a partition of $\R^2$, and note that some of them
may be empty sets. Unlike the subcritical case, where $E_i(t)$ is open (see (E2')), we  can only conclude that $E_i(t)$ is a set of finite perimeter.
\item[(E3)] The perimeter measure of $E_i(t)$
bounds $\|V_t\|$ from below and this property is 
useful to guarantee the non-triviality of $\|V_t\|$
in the sense that, as long as there are at least 
two elements of $E_1(t),\ldots,E_N(t)$
with positive Lebesgue measure, we have 
$\|V_t\|\neq 0$. 
\item[(E4)] Each $E_i(t)$ may be obtained as 
the time-slice of $S(i)\subset\mathbb R^2\times[0,\infty)$, which is itself a set of finite perimeter in $\mathbb R^2\times[0,T]$ for all $T$.
\item[(E5)] The Lebesgue
measure of $E_i(t)$ is H\"{o}lder continuous in
$t$.  In particular, it cannot suddenly vanish discontinuously. 
\item[(E6)] More detailed information is contained in \eqref{gvel2}
on the continuity of each $E_i(t)$.
\item[(E7)]
This is an intuitively expected result in that, if 
the density is one, then, there should be only 
one curve with unit density in the neighborhood, so that there should
be exactly two $E_i(t)$'s whose boundaries 
coincide with the curve. More precise information
is available if $N=2$. If 
the multiplicity is $\theta\geq 2$, then, there are 
$\theta$ curves counting multiplicities. Crossing each
curve transversely, one enters from $E_1(t)$ to $E_2(t)$ or vice-versa. Thus, depending on the
parity, the point is either in the reduced boundary or
inside the domain. The idea of proof is somewhat 
similar in spirit. 
\item[(E8)]
If we have a unit 
density flow, then the measure $\|V_t\|$ 
is precisely the one coming from the reduced
boundary of $E_i(t)$. In other
words, the higher multiplicity occurs with positive space-time measure only when
the discrepancy $\|V_t\|>{\bf var}(\cup_{i=1}^N\partial^* E_i(t),1)$ happens with positive
measure of time. 
\end{itemize}
We give more comments in Section \ref{erff}
comparing the subcritical case and the present critical case in particular. 

\section{Key a priori estimates}
In this section, we establish the length estimate 
\eqref{NS2} for an evolving 
curve with the velocity 
given by \eqref{NS} in the sense
expressed in \eqref{bra}. We also obtain the $L^2$ control of $u$. 
The first proposition is for a fixed time, so we drop the reference to $t$ in $V_t$. The result is certainly not  new and we include the
proof from  \cite{MR4158524} for the
reader's convenience.
\begin{proposition}\label{length}
Let $V \in  \mathbf{V}_1(\R^2)$ with  $\|V\|(\R^2)<\infty$
and $\|\delta V\|(\R^2)<\infty$. Then the following quantity   \begin{equation} 
    \mathfrak{D}(\|V\|):=    \sup_{B_r(x)\subset\mathbb R^2} \frac{\|V\|(B_r(x))}{r}   \end{equation}
    satisfies the estimate
    \begin{equation}\label{lenest}
    \mathfrak{D}(\|V\|)      \leq \|\delta V\|(\R^2).
    \end{equation}
    If in addition that $\|\delta V\|$ is absolutely 
    continuous with respect to $\|V\|$, then we have
    \begin{equation}\label{lenest2}
      \mathfrak{D}(\|V\|)\leq (\|V\|(\R^2))^{\frac12}\big(\int_{\R^2}|h(V)|^2\,d\|V\|\big)^{\frac12}.  
    \end{equation}
\end{proposition}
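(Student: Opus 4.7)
The plan is to test the first variation of $V$ against a radial vector field $g$ centered at $x$ whose magnitude is uniformly bounded by $r$, giving $|\delta V(g)|\le r\,\|\delta V\|(\R^2)$ directly, while arranging that the integrand $\nabla g\cdot S$ is pointwise nonnegative and equal to $1$ inside $B_r(x)$, so that $\delta V(g)\ge \|V\|(B_r(x))$. Combining the two inequalities yields $\|V\|(B_r(x))\le r\,\|\delta V\|(\R^2)$, and taking the supremum over balls produces \eqref{lenest}. The second estimate \eqref{lenest2} will then follow from \eqref{lenest} by Radon--Nikodym together with Cauchy--Schwarz: absolute continuity gives $\|\delta V\|(\R^2)=\int |h(V)|\,d\|V\|$, which is dominated by $\|V\|(\R^2)^{1/2}\bigl(\int|h(V)|^2\,d\|V\|\bigr)^{1/2}$.

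\textbf{The test field.} Concretely, I would fix $x\in\R^2$ and $r>0$, define $\eta(s):=\min\{1,r/s\}$ for $s>0$, and set $g(y):=\eta(|y-x|)(y-x)$. By construction $|g(y)|\le r$ for every $y$. Using $\operatorname{tr}(S)=1$ for $S\in\mathbf{G}(2,1)$, direct differentiation gives
\begin{equation*}
\nabla g(y)\cdot S \;=\; \eta(|y-x|) \;+\; \eta'(|y-x|)\,\frac{|S(y-x)|^2}{|y-x|}.
\end{equation*}
For $|y-x|<r$ one has $\eta\equiv 1$, so this equals $1$. For $|y-x|>r$, substituting $\eta(s)=r/s$ and $\eta'(s)=-r/s^2$, and using the orthogonal decomposition $|S(y-x)|^2=|y-x|^2-|S^\perp(y-x)|^2$, yields
\begin{equation*}
\nabla g(y)\cdot S \;=\; \frac{r\,|S^\perp(y-x)|^2}{|y-x|^3}\;\ge\; 0.
\end{equation*}
Integrating over $V$ gives $\int \nabla g\cdot S\,dV \ge \|V\|(B_r(x))$, while $|g|\le r$ and the definition of total variation give $\delta V(g)\le r\,\|\delta V\|(\R^2)$. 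The identity \eqref{tonegawakimequ2.2} then closes the argument.

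\textbf{The hard part.} The principal technical nuisance is that $\eta$ has a corner at $s=r$ and $g$ fails to be compactly supported, so \eqref{tonegawakimequ2.2} does not apply directly to $g$ as written. I would resolve this by a routine mollification of $\eta$ near $s=r$ together with multiplication by a smooth far-field cutoff $\chi_R$ supported in $B_R(x)$ and equal to $1$ on $B_{R-1}(x)$. Since both $\|V\|(\R^2)$ and $\|\delta V\|(\R^2)$ are assumed finite, the commutator terms involving $\nabla \chi_R$ tend to zero as $R\to\infty$ (using $\|V\|(B_R\setminus B_{R-1})\to 0$), and the mollification error vanishes as the smoothing parameter tends to zero. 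The pointwise nonnegativity of $\nabla g\cdot S$ outside $B_r(x)$ is preserved under these approximations, so the lower bound $\int\nabla g\cdot S\,dV\ge \|V\|(B_r(x))$ survives the limit and the argument concludes as above.
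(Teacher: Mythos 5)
Your argument is correct and is essentially the paper's own proof: the paper tests $\delta V$ against the radial field $X(y)=\bigl(\tfrac{1}{|y-x_0|_\sigma}-\tfrac{1}{\rho}\bigr)_+(y-x_0)$, whose tangential divergence is $\tfrac{1}{\sigma}-\tfrac{1}{\rho}$ on $B_\sigma$ and $\tfrac{|S^\perp(y-x_0)|^2}{|y-x_0|^3}-\tfrac1\rho$ outside, and then lets $\rho\to\infty$; your $g=\eta(|y-x|)(y-x)$ is just $r$ times this field with the far-field truncation implemented by a cutoff $\chi_R$ instead of the subtracted $1/\rho$, and both proofs finish \eqref{lenest2} by the identical Radon--Nikodym plus Cauchy--Schwarz step.
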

\begin{proof}
For  $0<\sigma<\rho$ and $x_0\in\R^2$, we consider  the vector field $$X(x)=\left(\frac{1}{\left|x-x_0\right|_\sigma}-\frac{1}{\rho}\right)_{+} (x-x_0),$$  where $(\cdot)_{+}$ denotes the nonnegative part and $|\cdot|_\sigma=$ $\max \{|\cdot|, \sigma\}$.  
For $S\in {\bf G}(2,1)$, we can 
compute as
$$
S\cdot \nabla X(x)= \begin{cases}\frac{1}{\sigma}-\frac{1}{\rho} & \text { on }  B_\sigma(x_0), \\ \frac{\left|S^\perp\left(x-x_0\right)\right|^2}{\left|x-x_0\right|^3}-\frac{1}{\rho} & \text { on } B_\rho(x_0)\backslash \bar{B}_\sigma(x_0) ,\end{cases}
$$
where $S^\perp$ represents the orthogonal projection to the
orthogonal complement of $S$.
  Integrating   $S\cdot\nabla X$   with respect to $V$,
  
  $$
  \delta V(X)=\frac{\|V\|(B_\sigma(x_0))}{\sigma}-\frac{\|V\|(B_\rho(x_0))}{\rho}
  +\int_{G_1(B_\rho (x_0)\setminus
  B_\sigma(x_0))} \frac{|S^\perp(x-x_0)|^2}{|x-x_0|^3}
  \,dV(x,S)$$
  while using $|X|\leq 1$, we
  have
  $$
  \frac{\|V\|(B_\sigma(x_0))}{\sigma}-\frac{\|V\|(B_\rho(x_0))}{\rho}\leq \|\delta V\|(\R^2).
  $$
   By letting $\rho\rightarrow \infty$, we obtain \eqref{lenest}
   and using \eqref{tonegawakimequ2.2a}, 
  we have \eqref{lenest2}.
\end{proof}
We next recall the Meyers-Ziemer inequality \cite{meyers1977integral} that bridges the 
integral with respect to a Radon measure and the Lebesgue measure of compactly supported functions. 
\begin{theorem}\label{MZ} 
There exists a constant $\Cr{MZ}>0$ depending 
only on $n$ with the following property. 
 Suppose that $\mu$ is a Radon 
measure on $\mathbb R^{n+1}$ with 
\begin{equation}\label{le1}
    \mathfrak{D}(\mu):=\sup_{B_r(x)\subset\mathbb R^{n+1}}
    \frac{\mu(B_r(x))}{r^{n}}<\infty.
\end{equation}
Then for any $\phi\in C^1_c(\mathbb R^{n+1})$, we have
\begin{equation}\label{le2}
    \int_{\mathbb R^{n+1}}\phi(x)\,d\mu(x)\leq \sqrt{\Cr{MZ}}\,\mathfrak{D}(\mu)\int_{\mathbb R^{n+1}}|\nabla \phi(x)|\,dx.
\end{equation}
\end{theorem}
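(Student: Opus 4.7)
The plan is to combine a layer-cake decomposition with a Besicovitch covering argument based on the relative isoperimetric inequality, so that the entire statement reduces to the density hypothesis \eqref{le1}. First, since $\int \phi\, d\mu \leq \int |\phi|\, d\mu$ and $|\nabla|\phi|| = |\nabla \phi|$ almost everywhere (after approximating $|\phi|$ by non-negative $C^1_c$ functions), I may assume $\phi \geq 0$. The layer-cake formula applied to the Radon measure $\mu$ and the coarea formula applied to $\phi$ yield
\begin{equation*}
    \int_{\R^{n+1}} \phi\, d\mu = \int_0^\infty \mu(\{\phi > t\})\, dt, \qquad \int_{\R^{n+1}} |\nabla \phi|\, dx = \int_0^\infty \mathcal{H}^n(\{\phi = t\})\, dt,
\end{equation*}
so it suffices to prove that, for almost every $t \in (0, \max \phi)$, writing $U_t := \{\phi > t\}$, one has a set-theoretic estimate of the form $\mu(U_t) \leq C_n\, \mathfrak{D}(\mu)\, \mathcal{H}^n(\partial U_t)$; the inequality \eqref{le2} then follows by integration in $t$, with $\Cr{MZ} := C_n$.

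For this set-theoretic estimate, Sard's theorem guarantees that for almost every $t$ the set $U_t$ is open, bounded, and has smooth boundary $\{\phi = t\}$. For each $x \in U_t$, the ratio $r \mapsto |U_t \cap B_r(x)|/|B_r(x)|$ starts at $1$ (because $U_t$ is open) and tends to $0$ as $r \to \infty$, so by continuity there exists a smallest radius $r(x) > 0$ satisfying $|U_t \cap B_{r(x)}(x)| = \tfrac{1}{2}|B_{r(x)}(x)|$. The relative isoperimetric inequality on $B_{r(x)}(x)$ then gives $r(x)^n \leq c_n\, \mathcal{H}^n(\partial U_t \cap B_{r(x)}(x))$, and combining this with the density hypothesis \eqref{le1} yields
\begin{equation*}
    \mu(B_{r(x)}(x)) \leq \mathfrak{D}(\mu)\, r(x)^n \leq c_n\, \mathfrak{D}(\mu)\, \mathcal{H}^n(\partial U_t \cap B_{r(x)}(x)).
\end{equation*}
I would next apply the Besicovitch covering theorem to the family $\{B_{r(x)}(x)\}_{x \in U_t}$, extracting $N_{n+1}$ pairwise-disjoint subfamilies whose union covers $U_t$; summing the displayed estimate over each subfamily and using disjointness to bound the sum of $\mathcal{H}^n(\partial U_t \cap B_{r(x_i)}(x_i))$ by $\mathcal{H}^n(\partial U_t)$, one obtains the desired set-theoretic inequality with $C_n = N_{n+1}\, c_n$.

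The main technical obstacle is ensuring that the selection $x \mapsto r(x)$ has the measurability required to invoke the Besicovitch covering theorem. This is routine: $r(x)$ is in fact continuous on $U_t$, as the implicit solution of $|U_t \cap B_r(x)| = \tfrac{1}{2}|B_r(x)|$ whose two sides are jointly continuous in $(x,r)$ and strictly monotone in $r$ near $r(x)$. Beyond this verification, the argument requires nothing more than the classical relative isoperimetric inequality on Euclidean balls and Besicovitch's theorem, and the resulting constant $\Cr{MZ}$ depends only on $n+1$, as claimed.
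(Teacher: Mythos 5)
Your strategy is the standard proof of the Meyers--Ziemer inequality (the paper does not prove Theorem \ref{MZ}; it simply cites \cite{meyers1977integral}): layer-cake plus coarea to reduce to a level-set estimate, the half-density stopping radius $r(x)$, the relative isoperimetric inequality on balls, and a Besicovitch covering. The covering step is sound, and your ``main technical obstacle'' is in fact not one: the Besicovitch covering theorem requires only that the radii be uniformly bounded (which holds here because $U_t$ is bounded, so $r(x)$ is bounded in terms of $|U_t|$), and imposes no measurability or continuity condition on $x\mapsto r(x)$. Your continuity claim is actually doubtful as stated --- the smallest root of a jointly continuous function can jump upward under perturbation of $x$ when the function merely touches zero there --- but nothing in the argument depends on it.

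The one genuine flaw is the appeal to Sard's theorem. For a $C^1$ map $\mathbb{R}^{n+1}\to\mathbb{R}$ with $n\ge 1$, Sard's theorem fails: the classical statement requires $C^{n+1}$ regularity, and Whitney's example produces a $C^1$ function on $\mathbb{R}^2$ whose critical values contain an interval. So you cannot conclude that $\{\phi=t\}$ is a smooth hypersurface for a.e.\ $t$. The repair is routine and does not change the architecture: either (i) prove the inequality for $\phi\in C^\infty_c$ and pass to $C^1_c$ by mollification, using that $\mu$ is finite on compact sets; or (ii) avoid smoothness of level sets altogether by invoking the BV coarea formula $\int_{\mathbb{R}^{n+1}}|\nabla\phi|\,dx=\int_0^\infty P(U_t)\,dt$, under which $U_t$ is a bounded set of finite perimeter for a.e.\ $t$, and run your isoperimetric and covering argument with the reduced boundary $\partial^* U_t$ in place of $\partial U_t$ (the relative isoperimetric inequality on balls holds in exactly that form, and disjointness still gives $\sum_i\mathcal{H}^n(\partial^* U_t\cap B_i)\le \mathcal{H}^n(\partial^* U_t)\le P(U_t)$ over each subfamily). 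With that substitution the proof is complete and the constant depends only on $n$, as required.
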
 
We use Theorem \ref{MZ} 
and Proposition \ref{length}
to obtain the following a priori estimate for flows satisfying 
\eqref{prob} with a regular $u$. 
\begin{proposition}\label{lengthcontrol}
    Consider a family of varifolds  
  $\left\{V_t\right\}_{t \geq 0} \subset \mathbf{V}_{1}(\R^2)$ 
  with the following properties.
  \begin{itemize}
  \item[(a)] For a.e.~$t\geq 0$, $V_t\in {\bf IV}_1(\R^2)$, 
  $\delta V_t$ is bounded and absolutely continuous with respect to $\|V_t\|$, and $\int_0^T\int_{\R^2}|h(x,V_t)|^2\,d\|V_t\|dt<\infty$ for all $T>0$.
      \item[(b)] For all $T>0$, $\sup_{t\in[0,T]}\|V_t\|(\R^2)<\infty$.
\item[(c)] For some $u\in C_c^1(\R^2\times[0,\infty);\R^2)$,
      the property (V7) in Theorem \ref{mainth1} is satisfied. 
  \end{itemize}
  Set $\Cr{c1}=\Cr{c1}(u,T)$ as in \eqref{NS1}. Then we have
    \begin{equation}\label{goo3}
        \sup_{t\in[0,T]}\|V_t\|(\R^2)\leq 
        \|V_0\|(\R^2)\exp(\Cr{MZ}\,
        \Cr{c1}),
    \end{equation}
    \begin{equation}\label{ga1}
\int_0^T\int_{\R^2}|h(V_t)|^2\,d\|V_t\|dt\leq 4\|V_0\|(\R^2)(1+\Cr{MZ}\Cr{c1}\exp(\Cr{MZ}\,\Cr{c1})),
    \end{equation}
    \begin{equation}\label{gaa1}
\int_{0}^{T}\int_{\R^2}|u|^2\,d\|V_t\|dt \leq 4\sqrt{\Cr{c1}\Cr{MZ}}\|V_0\|(\R^2)\sqrt{1+\Cr{MZ}\Cr{c1}\exp(\Cr{MZ}\Cr{c1})}\exp(\Cr{MZ}\Cr{c1}/2).
\end{equation}
\end{proposition}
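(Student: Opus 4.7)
The strategy is to close a differential inequality for $\|V_t\|(\R^2)$: test the Brakke inequality \eqref{bra} with $\phi\equiv 1$, bound the resulting forcing integral $\int|u|^2\,d\|V_t\|$ via the Meyer-Ziemer theorem combined with the density bound of Proposition \ref{length}, and absorb the curvature term with Young's inequality. Grönwall's lemma then delivers \eqref{goo3}, and a subsequent quadratic inequality closes \eqref{ga1} and \eqref{gaa1}.

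My first step is to test \eqref{bra} with a time-independent cutoff $\phi_R\in C_c^1(\R^2)$ satisfying $\phi_R\equiv 1$ on $B_R$, $|\nabla\phi_R|\leq 2/R$, and $R$ so large that $\spt u(\cdot,t)\subset B_R$ for all $t\in[0,T]$. Then $\nabla\phi_R\cdot u^\perp\equiv 0$ identically, while Cauchy-Schwarz together with hypotheses (a), (b) gives
\[
\int_0^T\Big|\!\int_{\R^2}\nabla\phi_R\cdot h\,d\|V_t\|\Big|\,dt\leq \tfrac{2}{R}\sqrt{T}\,\big(\sup_{[0,T]}\|V_t\|(\R^2)\big)^{1/2}\Big(\int_0^T\!\!\int|h|^2 d\|V_t\|dt\Big)^{1/2},
\]
which tends to $0$ as $R\to\infty$. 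Dominated convergence on the remaining terms, the perpendicularity identity $h\cdot u^\perp=h\cdot u$, and Young's inequality $|h\cdot u|\leq \tfrac12|h|^2+\tfrac12|u|^2$ then combine to produce the Brakke-type estimate
\[
(\dagger)\qquad \|V_{t_2}\|(\R^2)+\tfrac12\int_{t_1}^{t_2}\!\!\int|h|^2 d\|V_t\|dt\leq \|V_{t_1}\|(\R^2)+\tfrac12\int_{t_1}^{t_2}\!\!\int|u|^2 d\|V_t\|dt.
\]

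The second step applies Theorem \ref{MZ} to $\phi=|u(\cdot,t)|^2\in C_c^1(\R^2)$ and then Proposition \ref{length} to control, for a.e.\ $t$,
\[
(\ddagger)\qquad \int|u|^2 d\|V_t\|\leq 2\Cr{MZ}\|u\|_{L^2}\|\nabla u\|_{L^2}\big(\|V_t\|(\R^2)\big)^{1/2}\Big(\!\int|h|^2 d\|V_t\|\Big)^{1/2}.
\]
Young's inequality $2ab\leq a^2+b^2$ converts $(\ddagger)$ into $\tfrac12\int|u|^2 d\|V_t\|\leq \tfrac12\Cr{MZ}^2\|u\|_{L^2}^2\|\nabla u\|_{L^2}^2\|V_t\|(\R^2)+\tfrac12\int|h|^2 d\|V_t\|$. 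Plugged into $(\dagger)$, this cancels the curvature integrals and leaves a linear integral inequality for $\|V_t\|(\R^2)$ whose coefficient has time-$L^1$ norm bounded by $\Cr{c1}$; Grönwall's lemma then produces \eqref{goo3} (absorbing the factor $\tfrac12$ into the absolute constant $\Cr{MZ}$ if desired).

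For \eqref{ga1} and \eqref{gaa1}, I integrate $(\ddagger)$ in $t$ and apply Cauchy-Schwarz in time together with \eqref{goo3} to get
\[
\int_0^T\!\!\int|u|^2 d\|V_t\|dt\leq 2\Cr{MZ}\sqrt{\Cr{c1}}\sqrt{\|V_0\|(\R^2)}\exp\!\big(\tfrac12\Cr{MZ}^2\Cr{c1}\big)\Big(\int_0^T\!\!\int|h|^2 d\|V_t\|dt\Big)^{1/2}.
\]
Writing $A:=\int_0^T\!\int|h|^2 d\|V_t\|dt$ and $B$ for the coefficient appearing in the previous display, $(\dagger)$ with $t_1=0$, $t_2=T$ rearranges to $A\leq 2\|V_0\|(\R^2)+B\sqrt{A}$; solving this quadratic inequality in $\sqrt{A}$ and using $(x+y)^2\leq 2x^2+2y^2$ produces \eqref{ga1}, and inserting the resulting bound on $\sqrt{A}$ back into the estimate for $\int_0^T\int|u|^2 d\|V_t\|dt$ yields \eqref{gaa1}. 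The only genuine analytic care lies in the cutoff step above, which relies on the compact support of $u$ (to make $\nabla\phi_R\cdot u^\perp$ vanish identically for large $R$) and on the a priori finiteness in hypotheses (a), (b) (to kill the $\nabla\phi_R\cdot h$ contribution in the limit); once $(\dagger)$ is in hand, the remainder is a disciplined chain of Meyer-Ziemer, Proposition \ref{length}, and Young/Grönwall.
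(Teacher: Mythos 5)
Your proof is correct and follows essentially the same route as the paper's: the same large-radius cutoff limit of \eqref{bra}, the same Meyer--Ziemer-plus-Proposition~\ref{length} bound on $\int|u|^2\,d\|V_t\|$, and the same Young/Gr\"{o}nwall closure. The only (cosmetic) difference is that the paper absorbs just half of the curvature term in the trace estimate so that \eqref{goo3} and \eqref{ga1} both drop out of a single combined inequality (\eqref{goo4} there), whereas you cancel the curvature entirely for the Gr\"{o}nwall step and then recover \eqref{ga1} from the quadratic inequality $A\leq 2\|V_0\|(\R^2)+B\sqrt{A}$; both versions yield the stated constants.
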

\begin{proof}
We consider a sequence $\{\phi_\ell\}_{\ell\in\mathbb N}\subset C_c^1\left(\R^2; [0,1]\right)$ which equals  $1$ in $B_\ell$ and vanishes outside $B_{2\ell}$ with $|\nabla\phi_\ell|\leq 1/\ell$.  Using \eqref{bra} and the dominated convergence theorem, we may
take the limit $\ell\to \infty$ in \eqref{bra} with $\phi=\phi_\ell$ and obtain
\begin{equation} \begin{split}
    \left.\|V_t\|(\R^2)\right|_{t=t_1} ^{t_2}+&\int_{t_1}^{t_2} \int_{\R^2}|h(V_t)|^2\, d\|V_t\|dt\leq \int_{t_1}^{t_2}  \int_{\R^2} -h(V_t) \cdot u^\bot  \,d\|V_t\|dt\\
    &\leq\frac12\int_{t_1}^{t_2}\int_{\R^2}
|h(V_t)|^2\,d\|V_t\|dt+\frac12\int_{t_1}^{t_2}\int_{\R^2}
    |u|^2\,d\|V_t\|dt.
\end{split}\end{equation}
To estimate the last integral, we argue for a.e. $t\in [t_1,t_2]$ that 
\begin{equation}\label{u2Vt}
\begin{split}
    \int_{\R^2} & |u |^2\,d\|V_t \| \overset{\eqref{le2}}\leq \sqrt{\Cr{MZ}}\,\mathfrak{D}( \|V_t \|)\int_{\R^2}|\nabla |u |^2|\,dx \\&
    \overset{\eqref{lenest2}}\leq
    2\sqrt{\Cr{MZ}}(\|V_t\|(\R^2))^{\frac12}
    \big(\int_{\R^2}|h(V_t)|^2\,d\|V_t\|
    \big)^{\frac12}\big(\int_{\R^2}|\nabla u|^2\,dx\big)^{\frac12}\big(\int_{\R^2}|u|^2\,dx\big)^{\frac12}\\
    &\leq 
    \frac 12 \int_{\R^2} |h(V_t)|^2\, d\|V_t\|  +2\Cr{MZ}
\|V_t\|(\R^2)\int_{\mathbb R^2}|\nabla u|^2\,dx
\int_{\mathbb R^2}|u|^2\,dx.    \end{split}
    \end{equation}
  The above  two inequalities together imply 
  \begin{equation} \label{goo4}
  \begin{split}
    &\left.\|V_t\|(\R^2)\right|_{t=t_1} ^{t_2}+\frac 14\int_{t_1}^{t_2}\int_{\R^2} |h(V_t)|^2\, d\|V_t\| dt\\
    &\leq \Cr{MZ}\sup_{t\in[t_1,t_2]}\|u(\cdot,t)\|_{L^2}^2\int_{t_1}^{t_2} \big( \|V_t\|(\R^2)
\int_{\mathbb R^2}|\nabla u|^2\,dx\big)dt.
\end{split}
\end{equation}
This combined with Gronwall's inequality leads to \eqref{goo3}. Using \eqref{goo3} in 
\eqref{goo4}, we obtain \eqref{ga1},
and using \eqref{goo3} and \eqref{ga1} in \eqref{u2Vt}, one obtains \eqref{gaa1}. 
\end{proof}
\section{Existence result for flow with regular forcing}\label{erff}
Given a vector field with a compact support $u\in C_c^1(\mathbb R^{n+1}\times[0,\infty);\mathbb R^{n+1})$ and a suitable initial datum, by modifying the proof of \cite{kim2017mean,Kim-Tone2,ST-canonical}, we may establish the existence
of a flow of multi-phase boundary moving with  $v=h+u^\perp$. Since the modifications are 
somewhat of technical nature, we relegate the
proof to Appendix \ref{append}.
We state the 
result for general dimensions since
no change is needed. The stated results are not
exhaustive and the reader is referred to \cite{ST-canonical} for additional properties, such as \cite[Theorem 2.10,2.12,2.13]{ST-canonical}, which
hold equally for the current situation.
\begin{assumption}\label{asp}
Consider the following ${\rm (A1)}$--${\rm (A3)}$, ${\rm (A4')}$, $n\in \mathbb N$:     \begin{itemize}
     \item[(A1)] $N$ is an integer with $N\geq 2$.
     \item[(A2)] $E_{0,1},\ldots,E_{0,N}$ are non-empty open sets in $\mathbb R^{n+1}$ which
     are mutually disjoint.
     \item[(A3)] $\Gamma_0:=\mathbb R^{n+1}\setminus
     \cup_{i=1}^N E_{0,i}$ is countably 
     $n$-rectifiable and $\mathcal H^n(\Gamma_0)<\infty$. 
     \item[(A4')] $u$ is a vector field in $C^1_c(\mathbb R^{n+1}\times[0,\infty);\mathbb R^{n+1})$.
 \end{itemize}
 \end{assumption}
\begin{theorem}\label{exreg}
Assume ${\rm (A1)-(A3),(A4')}$. 
Then 
there exist a family $\{V_t\}_{t\geq 0}\subset{\bf V}_n(\mathbb R^{n+1})$ and a family of
open sets $\{E_i(t)\}_{t\geq 0}$ for each $i=1,\ldots, N$ with the following properties:
\newline
Properties for $V_t$:
\begin{itemize}
\item[(V1')] $\|V_0\|=\mathcal H^n\lfloor_{\Gamma_0}$.
\item[(V2')] If $\mathcal H^n(\cup_{i=1}^N (\partial E_{0,i}\setminus \partial^* E_{0,i}))=0$,
then $\lim_{t\rightarrow 0+}\|V_t\|=\|V_0\|$.
    \item[(V3')] For a.e.~$t\geq 0$, $V_t\in {\bf IV}_n(\mathbb R^{n+1})$,
    and if $n=1$, ${\rm (V3)}$ of Theorem \ref{mainth1} holds.
    \item[(V4')] For a.e.~$t\geq 0$, $\delta V_t$
    is bounded and absolutely continuous 
    with respect to $\|V_t\|$ (thus $h(x,V_t)$ exists).
    \item[(V5')] For all $T>0$,  
    $h(\cdot,V_t)$ satisfies 
    \begin{equation}
    \int_0^T\int_{\mathbb R^{n+1}}|h(x,V_t)|^2\,d\|V_t\|(x)dt<\infty.
    \end{equation}
    \item[(V6')] For all $T>0$, $\sup_{t\in[0,T]}\|V_t\|(\mathbb R^{n+1})<\infty$.
    \item[(V7')] 
    For all $0\leq t_1<t_2<\infty$ and $\phi\in C^1_c(\mathbb R^{n+1}\times[0,\infty))$ with $\phi\geq 0$, it holds
    \begin{equation}\label{bra-n}
    \begin{split}
    &\|V_{t_2}\|(\phi(\cdot,t_2))-\|V_{t_1}\|(\phi(\cdot,t_1)) \leq \int_{t_1}^{t_2}\int_{\mathbb R^{n+1}}
   \frac{\partial\phi}{\partial t}(x,t)
    \,d\|V_t\|(x)dt\\
    & +\int_{t_1}^{t_2}\int_{\mathbb R^{n+1}}
    \big\{\nabla\phi(x,t)-\phi(x,t)h(x,V_t)\big\}
    \cdot \big\{h(x,V_t)+u^\perp(x,t)\big\}
    \,d\|V_t\|(x)dt.
    \end{split}
    \end{equation} 
\end{itemize}
Properties of $E_i(t)$:
\begin{itemize}
\item[(E1')] $E_i(0)=E_{0,i}$ for $i=1,\ldots,N$.
\item[(E2')] $E_1(t),\ldots,E_N(t)$ are mutually disjoint open sets for all $t\geq 0$.
\item[(E3')] Writing $\Gamma(t):=\mathbb R^{n+1}
\setminus \cup_{i=1}^N E_i(t)$, we have $\mathcal H^n(\Gamma(t))<\infty$ and $\Gamma(t)=\cup_{i=1}^N\partial E_i(t)$ for all $t>0$.
\item[(E4')] Write $d\mu:=d\|V_t\|dt$ and $({\rm spt}\,\mu)_t:=\{x\in\mathbb R^{n+1}:(x,t)\in {\rm spt}\,\mu\}$. Then $\Gamma(t)=({\rm spt}\,\mu)_t$ for all $t>0$. 
\item[(E5')] For all $t\geq 0$, $\|V_t\|\geq \|
\nabla
\chi_{E_i(t)}\|$ for each $i=1,\ldots, N$
 and 
 $2\|V_t\|\geq \sum_{i=1}^N\|\nabla\chi_{E_i(t)}\|$.
 \item[(E6')] $S(i):=\{(x,t): x\in E_i(t), t\geq 0\}$ is open in $\mathbb R^{n+1}\times[0,\infty)$ 
for $i=1,\ldots,N$.
\item[(E7')] For any $0\leq t_1<t_2<\infty$,  
$\phi\in C^1_c(\mathbb R^{n+1}\times[0,\infty))$
and $i=1,\ldots,N$, we have
\begin{equation}\label{gvel}
\int_{E_i(t)}\phi(x,t)\,dx\Big|_{t=t_1}^{t_2}
    =\int_{t_1}^{t_2}\Big(\int_{\partial^* E_i(t)}(h+u)\cdot\nu_i\,\phi\,d\mathcal H^n
    +\int_{E_i(t)}\frac{\partial\phi}{\partial t}\,dx\Big)\,dt.
\end{equation}
Here $\nu_i$ is the outer unit-normal of the
reduced boundary $\partial^* E_i(t)$. 
\item[(E8')] Let $\theta_t(x)$ be the density of $\|V_t\|$.
Then for $\mathcal L^1$-a.e.\,$t\geq 0$ and $\mathcal H^n$-a.e.\,$x\in\Gamma(t)$,
\begin{itemize}
\item[(1)] if $N\geq 3$, then $\theta_t(x)=1$ implies
$x\in \cup_{i=1}^N\partial^* E_i(t)$.
\item[(2)] If $N=2$, 
\begin{equation}
\theta_t(x)=\left\{\begin{array}{ll}\mbox{odd integer for } x\in\partial^* E_1(t)(=\partial^* E_2(t)), \\
\mbox{even integer for }
x\in \Gamma(t)\setminus \partial^* E_1(t).
\end{array}
\right.
\end{equation}
\end{itemize}
\end{itemize}
\end{theorem}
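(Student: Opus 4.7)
The plan is to adapt the time-discrete Lipschitz deformation scheme of Kim-Tonegawa (\cite{kim2017mean,Kim-Tone2,ST-canonical}) to incorporate the smooth forcing $u$. Since $u\in C^1_c$, the natural modification is an operator-splitting procedure: at each discrete time step, first advect the open partition along the flow map of $u$, then perform the Kim-Tonegawa area-decreasing Lipschitz deformation. This preserves the open-partition structure and, in the limit, yields a Brakke flow with the additional forcing contribution $u^\perp$ in the velocity.

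More concretely, for each discretization parameter $j$, I would fix $\Delta t_j := 2^{-j}$, set $V^j_0 := \var(\Gamma_0,1)$ and $E^j_{0,i} := E_{0,i}$, and inductively construct $(V^j_{k\Delta t_j}, E^j_{k,i})$ for $k=1,2,\ldots$ as follows. Let $\Phi^u_{s,t}$ denote the flow of $\dot x = u(x,\tau)$ from time $s$ to time $t$, which is a global $C^1$-diffeomorphism since $u\in C^1_c$. The \emph{advection step} sets $\widetilde E^j_{k+1,i} := \Phi^u_{t_k,t_{k+1}}(E^j_{k,i})$ and $\widetilde V^j_{k+1} := (\Phi^u_{t_k,t_{k+1}})_\sharp V^j_k$. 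The \emph{curvature step} then applies the Kim-Tonegawa $\eps$-admissible Lipschitz deformation procedure (partition $\R^{n+1}$ into small cubes of side $\sim j^{-1}$, select a multi-phase-compatible area-decreasing Lipschitz map supported in each cube, iterate to near-optimality) to produce $(V^j_{k+1}, E^j_{k+1,i})$. The a priori bounds to maintain are: (i) the advection step multiplies area by at most $1 + C\|u\|_{C^1}\Delta t_j$; (ii) the curvature step only decreases area; and (iii) the cumulative discrete $L^2$-curvature along the scheme is controlled by the initial length $\Ha^n(\Gamma_0)$ modulated by the advection gain, uniformly in $j$.

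Passing to the limit $j\to\infty$ via Radon-measure compactness and a time-slice diagonal extraction yields the families $\{V_t\}$ and $\{E_i(t)\}$. Properties (V1'), (V4'), (V5'), (V6'), (V7') and (E1')--(E8') then follow by arguments directly parallel to \cite{ST-canonical}: the advection step contributes the $u^\perp$-terms in the Brakke inequality via the first variation applied to $u$ on the pushforward, while the curvature step yields the standard $h$-terms, and the two telescope in the limit to give \eqref{bra-n}. The integrality claim (V3') and, in the case $n=1$, the network structure (closed curves or triple junctions at $0$, $60$, $120$ degrees) come from the Kim-Tonegawa regularity theorems, which are insensitive to the addition of a smooth $u^\perp$. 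Property (V2') is handled by the argument of \cite{Kim-Tone2} together with the fact that the advection perturbation of area is $O(\Delta t_j)$ and the hypothesis $\Ha^n(\cup_i(\pa E_{0,i}\setminus\pa^* E_{0,i}))=0$ rules out instantaneous vanishing of hidden boundary pieces. Property (E7') results from differentiating the volume of $\Phi^u_{t_k,t_{k+1}}(E^j_{k,i})$ and combining it with the area-decreasing step's generalized velocity identity, in the limit.

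The principal technical obstacle will be ensuring that the operator-splitting error across the two steps is small enough for the limiting Brakke inequality to be sharp. Specifically, one must show that $(\Phi^u_{t_k,t_{k+1}})_\sharp V^j_k$ has first variation differing from that of $V^j_k$ by a term of order $\Delta t_j\int \nabla u\cdot S\,\d V^j_k$, so that the $u^\perp$ contribution appears with the correct coefficient in the limit rather than being absorbed into an error term. A related subtlety is the identification of $u^\perp(\cdot,t)$ relative to $\operatorname{Tan}_x \|V_t\|$ on the limit: this uses Brakke's perpendicularity theorem \eqref{tonegawakimequ2.3} applied to $h(\cdot,V_t)$ together with the $L^2$-curvature bound (V5'), which forces tangent planes of $V^j_t$ to converge in measure to those of $V_t$ wherever the limit varifold has well-defined tangents. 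Since $u$ is smooth and compactly supported, all relevant quantities remain uniformly bounded in $j$, and the limiting procedure closes in a manner analogous to the smooth-forcing argument of \cite{takasao2016existence}.
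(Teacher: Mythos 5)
Your proposal is essentially correct in architecture and matches the paper's strategy at the top level (modify the Kim--Tonegawa time-discrete scheme of \cite{kim2017mean,Kim-Tone2,ST-canonical} and pass to the limit), but it incorporates the forcing differently. The paper does \emph{not} split the advection off as a separate exact-flow step: it keeps the Kim--Tonegawa two-substep update (area-reducing Lipschitz deformation $f_1$, then an explicit Euler push) and simply replaces the push $f_2(x)=x+h_{\eps}(x,V)\Delta t$ by $f_2(x)=x+(h_{\eps}(x,V)+u(x,t))\Delta t$. Because $u\in C^1_c$, all of the one-step error estimates ((5.55)--(5.58) of \cite{kim2017mean}) survive verbatim with an extra additive term $\|u\|_{C^1}\|V\|(\R^{n+1})$, so the combined-Euler route requires almost no new analysis and avoids the splitting error you correctly identify as your principal obstacle. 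Your operator-splitting variant with the exact flow map $\Phi^u_{s,t}$ should also work (the advection is a $C^1$-diffeomorphism, preserves the open-partition structure exactly, and contributes $\delta(V,\phi)(u)\Delta t+O(\Delta t^2)$ to the discrete Brakke inequality), but it buys you little here since $u$ is smooth, and it forces you to verify that the first variation of the pushforward matches $\delta V$ to first order in $\Delta t$ --- precisely the estimate the paper gets for free from the Euler step. One caution on your wording: the ``curvature step'' must include the explicit $h_{\eps}$-push, not just the area-decreasing Lipschitz deformation; the latter alone produces no mean curvature motion.

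Two places where your plan is thinner than the paper's proof and where real work is needed. First, the properties (E2')--(E6') (openness of the phases, $\Gamma(t)=({\rm spt}\,\mu)_t$, openness of $S(i)$ in space-time) are obtained in \cite[Section 10]{kim2017mean} via Huisken's monotonicity formula and a clearing-out lemma; with forcing these must be modified, which the paper does by weighting the backward heat kernel with $\exp(-t\|u\|_{L^\infty}^2)$ and proving a forcing-adapted clearing-out lemma (with waiting time $r^2/(2n+2r\|u\|_{L^\infty})$). Your proposal does not address this, and it is not a consequence of the limit passage alone. Second, (E7') requires identifying the measure-theoretic normal velocity of $\pa^* E_i(t)$ with $(h+u)\cdot\nu_i$ à la \cite[Section 4]{ST-canonical}; ``differentiating the volume of $\Phi^u(E)$'' gives the $u$-contribution at the discrete level but not the identification in the limit. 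Neither point invalidates your approach, but both must be supplied for a complete proof.
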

We discuss the difference between the critical
and subcritical cases. Overall results are 
similar but the subcritical case gives 
better results due to the availability of 
Huisken's monotonicity formula. With it, one
can obtain an upper density ratio bound
\eqref{le1} uniform in time for any fixed 
time interval $[0,T]$. For the critical case, 
as one can see in \eqref{lenest2}, we only
have a time-integrated $L^2$ bound on $\mathfrak{D}(\|V_t\|)$
which is a substantially weaker control. Thus,
compared to (E2') where $E_i(t)$ is open, in (E2) we only have
 measurable $E_i(t)$ and a similar difference holds between (E6') and
(E4). This also causes a certain weaker conclusion
on (E6) compared to (E7'), where the result is 
reduced to the estimate \eqref{gvel2} instead of
the identity \eqref{gvel}. Technically speaking, 
we do not know if $d\|V_t\|dt$ is absolutely 
continuous with respect to $\mathcal H^2$ in $\R^2\times[0,\infty)$ for the critical case and
this gives a difficulty in establishing the identity
\eqref{gvel} in \cite{ST-canonical}.
\section{Proof of main theorem}
For the given vector field $u$ as in \eqref{NS}
and $m\in\mathbb N$, we first extend $u$ by zero for $t\notin [0,m]$ and we shall consider the mollified vector field  
\begin{align}\label{mollifyu}
u^{(m)}(x,t)=\int_{\R}\int_{\mathbb{R}^2}\zeta_{1/m}(x-y)\,\varrho_{1/m}(t-s)\,  u(y,s)\,dyds\,\eta (|x|/m)
\end{align}
where $\zeta_{\epsilon}$ and $\varrho_{\epsilon}$ are
respectively 2-d and 1-d standard 
mollifiers, and $\eta$ is in $C^{\infty}([0,\infty))$ with the property that $0\leq \eta\leq 1$, $\eta(s)=1$ for $0\leq s\leq 1$ and $\eta(s)=0$ for $s\geq 2$.
Then $u^{(m)}$ is a $C^\infty$ vector
field with compact support. By the standard argument (see   \cite[Theorem 2.29 and Lemma 3.16]{MR2424078}), one can prove 
for any $T>0$ that
\begin{align}
\label{ucon1}
    \lim_{m\to\infty}\int_0^T \|u^{(m)}(\cdot,t)-u(\cdot,t)\|_{W^{1,2}(\R^2)}^2\,dt=0
\end{align}
and
\begin{align}
    \label{uconin}
    \sup_{t\in[0,T]}\|u^{(m)}(\cdot,t)\|_{L^2(\mathbb R^2)}\leq
\esssup_{t\in[0,T+1]}\|u(\cdot,t)\|_{L^2(\mathbb R^2)}.
\end{align}
In the remaining part of this section, we let $\{V^{(m)}\}_{t\geq 0}$ and 
$\{E_i^{(m)}(t)\}_{t\geq 0}$ ($i=1,\ldots,N$) be the 
solution corresponding to $u=u^{(m)}$
in Theorem \ref{exreg}, and 
we prove that the desired solution
can be obtained as a suitable limit
of $V_t^{(m)}$ and $E_i^{(m)}(t)$.
\begin{proposition}\label{sel}
There exists a subsequence $\{m_j\}_{j\in\mathbb N}
\subset\mathbb N$ and a family of Radon measures
$\{\hat\mu_t\}_{t\geq 0}$ such that $\hat\mu_t=
\lim_{j\rightarrow\infty}\|V_t^{(m_j)}\|$ as measures for all $t\geq 0$ . 
\end{proposition}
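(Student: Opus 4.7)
My plan is a Brakke-style selection argument: use the Brakke inequality \eqref{bra-n} to decompose $t\mapsto \|V_t^{(m)}\|(\phi)$ as a difference of monotone functions for a countable collection of test functions $\phi$, then apply Helly's selection theorem with a diagonal extraction to obtain convergence at every $t$. The first ingredient is uniform boundedness in $m$. By \eqref{uconin} and \eqref{ucon1} the quantities $\Cr{c1}(u^{(m)},T)$ are uniformly bounded in $m$ for each $T>0$, and $\rm (V3')$--$\rm (V7')$ of Theorem \ref{exreg} supply the remaining hypotheses of Proposition \ref{lengthcontrol}. That proposition then yields constants $K_1(T),K_2(T)$, independent of $m$, such that
\[
\sup_{t\in[0,T]}\|V_t^{(m)}\|(\R^2)\leq K_1(T),\qquad \int_0^T\!\int_{\R^2}\!\bigl(|h(V_t^{(m)})|^2+|u^{(m)}|^2\bigr)\,d\|V_t^{(m)}\|\,dt\leq K_2(T).
\]

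Next I fix a countable family $\{\phi_k\}\subset C^2_c(\R^2;[0,\infty))$ which is dense in the cone of non-negative continuous compactly supported functions and for which each $|\nabla\phi_k|^2/\phi_k\in L^\infty(\R^2)$; one may, for instance, take $\phi_k=\psi_k^2$ with suitable $\psi_k\in C^1_c(\R^2)$. Testing \eqref{bra-n} with such a $\phi_k$ and applying Young's inequality gives
\[
(\nabla\phi_k-\phi_k h)\cdot(h+u^{(m)\perp})\leq -\tfrac14\phi_k|h|^2+\tfrac12\frac{|\nabla\phi_k|^2}{\phi_k}+|\nabla\phi_k|\,|u^{(m)}|+\phi_k|u^{(m)}|^2,
\]
which shows that
\[
G_k^{(m)}(t):=\int_0^t\!\int_{\R^2}\!\Bigl(\tfrac12\frac{|\nabla\phi_k|^2}{\phi_k}+|\nabla\phi_k|\,|u^{(m)}|+\phi_k|u^{(m)}|^2\Bigr)d\|V_s^{(m)}\|\,ds
\]
is non-decreasing in $t$ while $\alpha_k^{(m)}(t):=\|V_t^{(m)}\|(\phi_k)-G_k^{(m)}(t)$ is non-increasing. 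The bounds $K_1,K_2$, combined with Cauchy--Schwarz on the cross term $\int_0^T\!\int|\nabla\phi_k||u^{(m)}|\,d\|V_t^{(m)}\|\,dt\leq \|\nabla\phi_k\|_\infty(TK_1)^{1/2}K_2^{1/2}$, force both $\alpha_k^{(m)}$ and $G_k^{(m)}$ to be uniformly bounded on every $[0,T]$.

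Finally I apply Helly's selection theorem to these two families of monotone functions and diagonalize over $k\in\mathbb{N}$ and over intervals $[0,n]$ with $n\in\mathbb{N}$ to extract a single subsequence $\{m_j\}$ along which both $\alpha_k^{(m_j)}(t)$ and $G_k^{(m_j)}(t)$, hence $\|V_t^{(m_j)}\|(\phi_k)=\alpha_k^{(m_j)}(t)+G_k^{(m_j)}(t)$, converge for every $t\geq 0$ and every $k$. Setting $L_t(\phi_k):=\lim_j\|V_t^{(m_j)}\|(\phi_k)$, the uniform bound $\|V_t^{(m_j)}\|(\R^2)\leq K_1(T)$ and positivity let me extend $L_t$ via a $3\varepsilon$ density argument to a non-negative bounded linear functional on $C_c(\R^2)$, whence the Riesz representation theorem delivers a Radon measure $\hat\mu_t$ with $\hat\mu_t(\phi_k)=L_t(\phi_k)$, and the same density argument upgrades convergence from $\{\phi_k\}$ to all $\phi\in C_c(\R^2)$. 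The delicate point is securing convergence at \emph{every} $t$ rather than merely a.e.\ $t$, which is exactly what Helly's theorem gives from the monotone decomposition provided by \eqref{bra-n}; the requirement $|\nabla\phi_k|^2/\phi_k\in L^\infty$ is what lets the $\nabla\phi\cdot h$ cross term be absorbed without invoking the quantity $\int|h(V_t^{(m)})|^2\,d\|V_t^{(m)}\|$ at a fixed $t$, which is not known to be uniformly controlled.
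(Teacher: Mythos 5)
Your proposal is correct, and it reaches the conclusion by a route that differs from the paper's in the time-selection step. Both arguments start identically (uniform bounds from Proposition \ref{lengthcontrol} via \eqref{ucon1}--\eqref{uconin}, a monotone majorant extracted from \eqref{bra-n} by absorbing the $\nabla\phi\cdot h$ and $\phi h\cdot u^\perp$ terms into $-\tfrac14\phi|h|^2$, a countable sup-norm-dense family of non-negative test functions, and Riesz representation at the end). The difference is how convergence at \emph{every} $t$ is secured. The paper writes $\|V_t^{(m)}\|(\phi)-\Cr{ct3}(t+H^{(m)}(t)+U^{(m)}(t))$ as the non-increasing quantity, where $U^{(m)}$ converges to the \emph{fixed} continuous function $U$; it then converges $\|V_{t}^{(m_j)}\|(\phi)$ on a countable dense set of times, extends to a co-countable set $A$ using continuity of the monotone limit and of $H$ and $U$ there, and handles the countable complement of $A$ by a further diagonal extraction. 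You instead apply Helly's selection theorem \emph{separately} to the non-increasing part $\alpha_k^{(m)}$ and to the non-decreasing majorant $G_k^{(m)}$ (both uniformly bounded, the cross term controlled by Cauchy--Schwarz and \eqref{gaa1}), so the sum $\|V_t^{(m_j)}\|(\phi_k)$ converges at every $t$ in one step, with no dense-set/co-countable-set bookkeeping and no exceptional times. This is a legitimate and arguably cleaner selection argument; what it does not produce, and what the paper's version supplies as a by-product used for orientation in the subsequent analysis, is the pointwise convergence of the cumulative curvature integral $H^{(m_j)}(t)$ (the paper later re-derives the needed consequence \eqref{tonegawabookequ3.8} from \eqref{Bra1} and Fatou anyway, so nothing essential is lost). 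One small point to keep explicit: the requirement $|\nabla\phi_k|^2/\phi_k\in L^\infty$ is automatic for non-negative $C^2_c$ functions by the inequality $\sup_{\{\phi>0\}}|\nabla\phi|^2/\phi\leq 2\|\nabla^2\phi\|_{L^\infty}$ cited in the paper, so your $\phi_k=\psi_k^2$ device, while fine, is not needed.
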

\begin{proof} 
If we can prove the existence of such a subsequence
on any $[0,T]$, then a diagonal argument shows the
existence of a subsequence with the desired
property. Thus we fix $T>0$ in the following. 
By Proposition \ref{lengthcontrol} with 
$V_t^{(m)}$, we have \eqref{goo3}-\eqref{gaa1} with $\Cr{c1}=\Cr{c1}(u^{(m)},T)$. Then, by \eqref{ucon1} 
and \eqref{uconin}, the right-hand sides of 
\eqref{goo3}-\eqref{gaa1} are bounded depending only on $\|V_0\|(\R^2)=\mathcal H^1(\Gamma_0)$,
$\int_0^{T}\|\nabla u(\cdot,t)\|_{L^2(\mathbb R^2)}^2\,dt$ and $\esssup_{t\in[0,T+1]}\|u(\cdot,t)\|_{L^2(\mathbb R^2)}$ for all large $m$. Thus, there exists a
constant $\Cl[con]{ct}$ depending only on these
quantities (and 
not on $m$) such that
\begin{equation}\label{gmass}
   \sup_{t\in[0,T]} \|V_t^{(m)}\|(\mathbb R^2)\leq \Cr{ct}.
\end{equation}
Then  arguing precisely as in the proof of 
\eqref{goo4}, we may obtain
\begin{equation}\label{Bra1}
 \|V_t^{(m)}\|(\mathbb R^2)\big|_{t=t_1}^{t_2}
    + \frac 14\int_{t_1}^{t_2}\int_{\mathbb R^2} |h(V_t^{(m)})|^2\,d\|V_t^{(m)}\|dt\leq  \Cl[con]{ct2}\int_{t_1}^{t_2}\int_{\mathbb R^2}|\nabla u^{(m)}|^2\,dxdt,
\end{equation}
where $\Cr{ct2}$ is independent of $m$. 
To proceed, we introduce the following quantities: 
\begin{subequations}
\begin{align}
&\Phi^{(m)}(t):=\|V_t^{(m)}\|(\mathbb R^2),\\
    &H^{(m)}(t):=\int_{0}^t
    \int_{\mathbb R^2}\frac14 |h(V_s^{(m)})|^2\,d\|V_s^{(m)}\|ds,\\
    &U^{(m)}(t):=\Cr{ct2}\int_{0}^t \int_{\mathbb R^2}|\nabla u^{(m)}(x,s)|^2\,dxds, \quad U(t):=\Cr{ct2}\int_{0}^t\int_{\mathbb R^2}|\nabla u(x,s)|^2\,dxds, \\ &
    \Psi^{(m)}_1:=\Phi^{(m)}+H^{(m)}-U^{(m)},\,\,\Psi^{(m)}_2:=\Phi^{(m)}-U^{(m)}.
\end{align}
\end{subequations}
Then \eqref{Bra1} shows that 
$\Psi^{(m)}_1$ and $\Psi^{(m)}_2$ are non-increasing
functions of $t$. For a sequence of 
uniformly bounded non-increasing functions, we may extract a subsequence $\{m_j\}_{j\in\mathbb N}$ such that $\Psi_1^{(m_j)}$ and
$\Psi_2^{(m_j)}$ converge to some 
non-increasing functions point-wise.
More precisely, by a diagonal argument, we may choose a subsequence that converges point-wise on   $\mathbb Q_+=\mathbb Q\cap [0,\infty)$ first. The limit function is again a non-increasing function on $\mathbb Q_+$, which 
can be uniquely extended to a 
continuous function on $[0,\infty)$ except for a set of countable discontinuous points named $D\subset[0,\infty)$. Using again the non-increasing property, one can show the point-wise convergence on $[0,\infty)\setminus D$, and one may
choose a further subsequence to have
a convergence on countable $D$ by
the diagonal argument. This gives a desired subsequence $\{m_j\}_{j\in\mathbb N}$.
Since $U^{(m_j)}$ converges to $U$ 
point-wise by \eqref{ucon1}, we may conclude that
both $\Phi^{(m_j)}$ and $H^{(m_j)}$ converge point-wise in $t$, and we define
\begin{equation}\label{con1}
\Phi:=\lim_{j\rightarrow\infty}\Phi^{(m_j)}\text{ and }H:=\lim_{j\rightarrow\infty}H^{(m_j)}.
 \end{equation}
Note that a non-increasing function is 
continuous on a co-countable set.
From this and also the fact that $U$ is a continuous function, we conclude that
$H$ and $\Phi$ are continuous on a 
co-countable set. 
By the compactness property of Radon measures, on a countable dense set $\{t(k)\}_{k\in\mathbb N}$ of $[0,T]$, we may further extract a subsequence (denoted by the same notation) such that 
\begin{equation}\label{con2}
    \|V_{t(k)}^{(m_j)}\|\text{  converges to 
some Radon measure } \mu_{t(k)}\text{ as }
j\rightarrow\infty\text{ for each }k\in\mathbb N.
\end{equation}

We next prove that $\mu_{t(k)}$ can 
be extended to a Radon measure $\hat\mu_t$ 
on a co-countable set $A\subset[0,T]$ continuously and that
$\|V_t^{(m_j)}\|$ converges to $\hat\mu_t$ for all $t\in A$. 
For any $t_2>t_1$ and for any $\phi\in C^2_c(\mathbb R^2;\mathbb R_+)$, we use 
\eqref{bra-n} and the Cauchy-Schwarz inequality to obtain
\begin{equation}\|V_t^{(m)}\|(\phi)\|\big|_{t=t_1}^{t_2}\leq
\int_{t_1}^{t_2}\int_{\mathbb R^2}-\frac12 |h(V_t^{(m)})|^2\phi+\Cl[con]{coco}\frac{|\nabla\phi|^2}{\phi}+\|\phi\|_{L^\infty}|u^{(m)}|^2\,d\|V_t^{(m)}\|(x)dt.
\end{equation}
Here $\Cr{coco}$ is a constant 
independent of $m$.
Since $\sup_{\{\phi>0\}} (|\nabla \phi|^2/\phi)\leq 2\|\nabla^2\phi\|_{L^\infty}$
(see \cite[Lemma 3.1]{tonegawa2019brakke}), the second term on the right-hand side is bounded by $C(t_2-t_1)$
for some $C$ independent of $m$. Thus the same
computation in Proposition \ref{lengthcontrol} shows
    \begin{equation}\label{bra-up}
        \|V_t^{(m)}\|(\phi)\big|_{t=t_1}^{t_2}\leq \Cl[con]{ct3}\big(t+H^{(m)}(t)+U^{(m)}(t)\big)\big|_{t=t_1}^{t_2}.
    \end{equation}
     Here $\Cr{ct3}$ depends on  $\|\phi\|_{C^2}$ and others but not $m$. 
   Now let $t_2=t(k')>t(k)=t_1$ and $m_j$ in place of $m$. Since $\|V_{t(k)}^{(m_j)}\|(\phi)$ converges to $\mu_{t(k)}(\phi)$, and also $H^{(m_j)}(t(k))$ and $U^{(m_j)}(t(k))$ converge to $H(t(k))$ and $U(t(k))$ respectively, we have
   \begin{equation}
       \mu_{t}(\phi)\big|_{t=t(k)}^{t(k')}\leq 
       \Cr{ct3}(t+H(t)+U(t))\big|_{t=t(k)}^{t(k')}.
   \end{equation}
  This implies that $\mu_t(\phi)-\Cr{ct3}(t+H(t)+U(t))$ on the set $\{t(k)\}_{k\in\mathbb N}$ (recall that this set is dense in $[0,T]$) 
  is a monotone non-increasing function, thus 
  can be extended as a continuous function on a co-countable 
  set of $[0,T]$. Since $H$ is also continuous on a co-countable set
  and $U$ is continuous, we may 
  define $\hat\mu_t(\phi)=\lim_{j\rightarrow\infty}\mu_{t(k_j)}(\phi)$ (whenever $t(k_j)\rightarrow t$) on a co-countable set of $[0,T]$
  as a continuous function. The above argument was for a fixed $\phi\in C^2_c(\mathbb R^2;\mathbb R_+)$, but we can choose a countable set $\{\phi^{(\ell)}\}_{\ell\in\mathbb N}\subset C^2_c(\mathbb R^2;\mathbb R_+)$ which is dense in $C_c(\mathbb R^2;\mathbb R_+)$ with respect to the norm $\|\cdot\|_{L^{\infty}}$, and conclude that $\hat\mu_t(\phi^{(\ell)})$ is similarly defined on a co-countable set of $[0,T]$ for all $\ell\in\mathbb N$. We name this co-countable set again as $A$. Next, we prove that $\lim_{j\rightarrow \infty}\|V_t^{(m_j)}\|(\phi^{(\ell)})=
  \hat\mu_t(\phi^{(\ell)})$ for all $t\in A$.  
  To be clear, let us call an arbitrary point of $A$ as $\tilde t\in A$ and let $\{t(k_n)\}_{n\in\mathbb N}$ be any sequence with $t(k_n)<\tilde t$ and $\lim_{n\rightarrow\infty}t(k_n)=\tilde t$. 
   Since \eqref{bra-up} is valid for any $t_2>t_1$, choosing $t_2=\tilde t$ and 
   $t_1=t(k_n)$, we have
   \begin{equation}
       \|V_{\tilde t}^{(m_j)}\|(\phi^{(\ell)})\leq 
       \|V_{t(k_n)}^{(m_j)}\|(\phi^{(\ell)})+\Cr{ct3}(t+H^{(m_j)}(t)+U^{(m_j)}(t))\big|_{t=t(k_n)}^{\tilde t}.
   \end{equation}
By taking $\limsup_{j\rightarrow\infty}$
and using \eqref{con1} and \eqref{con2}, we have
\begin{equation}
\limsup_{j\rightarrow\infty}\|V_{\tilde t}^{(m_j)}\|(\phi^{(\ell)})\leq \mu_{t(k_n)}(\phi^{(\ell)})+\Cr{ct3}(t+H(t)+U(t))\big|_{t=t(k_n)}^{\tilde t},
\end{equation}
and since $H$ and $U$ are continuous at $\tilde t$ and we define
$\hat\mu_t(\phi^{(\ell)})$ as above, by letting $n\rightarrow\infty$, we have
\begin{equation}
    \limsup_{j\rightarrow\infty} \|V_{\tilde t}^{(m_j)}\|(\phi^{(\ell)})
    \leq \hat\mu_{\tilde t} (\phi^{(\ell)}). 
\end{equation}
Similarly, let $\{t(k_n)\}_{n\in\mathbb N}$ be any sequence with $\tilde t<t(k_n)$ and $\lim_{n\rightarrow\infty} t(k_n)=\tilde t$. Again letting $t_2=t(t_n)$ and $t_1=\tilde t$, we have
\begin{equation}
   \|V_{t(k_n)}^{(m_j)}\|(\phi^{(\ell)})-\Cr{ct3}(t+H^{(m_j)}(t)+U^{(m_j)}(t))\big|^{t(k_n)}_{t=\tilde t}\leq 
       \|V_{\tilde t}^{(m_j)}\|(\phi^{(\ell)}). 
\end{equation}
By arguing similarly and taking $\liminf_{n\rightarrow \infty}$, we have
\begin{equation}
    \hat\mu_{\tilde t}(\phi^{(\ell)})\leq \liminf_{j\rightarrow\infty}
    \|V_{\tilde t}^{(m_j)}\|(\phi^{(\ell)}).
\end{equation}
This proves the desired convergence on $A$, and since $\{\phi^{(\ell)}\}_{\ell\in\mathbb N}$ is dense in $C_c(\mathbb R^2;\mathbb R_+)$, we have the convergence of $\|V_{\tilde t}^{(m_j)}\|$
to $\hat\mu_{\tilde t}$ as Radon measures. On the complement of $A$ which is countable, we may choose a further subsequence (denoted by the same notation) of $\{m_j\}$ so
that $\|V_t^{(m_j)}\|$ converges to some Radon measure, which we again name $\hat\mu_t$. This
ends the proof. 
\end{proof}
\begin{proposition}\label{l2}
   For a.e.~$t\in[0,\infty)$, there exists a unique $V_t\in{\bf IV}_1(\mathbb R^2)$ such that $\hat \mu_t=\|V_t\|$, $\|\delta V_t\|$ is absolutely
   continuous with respect to $\|V_t\|$ and $h(V_t)$
   is in $L^2$. Moreover, for any $T<\infty$, \eqref{h2} holds
   for $h(V_t)$. 
   
\end{proposition}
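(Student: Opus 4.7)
The plan is to combine the uniform estimates from Proposition \ref{lengthcontrol} with Allard's compactness theorem for integral varifolds. Applying \eqref{ga1} to each approximate flow and using \eqref{ucon1}--\eqref{uconin} to pass $\Cr{c1}(u^{(m_j)},T)$ to $\Cr{c1}(u,T)$, we obtain
\begin{equation}\label{unifHbd}
\limsup_{j\to\infty}\int_0^T\!\!\int_{\R^2}|h(V_t^{(m_j)})|^2\,d\|V_t^{(m_j)}\|dt\leq 4\mathcal H^1(\Gamma_0)\bigl(1+\Cr{MZ}^2\Cr{c1}\exp(\Cr{MZ}^2\Cr{c1})\bigr).
\end{equation}
By Fatou's lemma, for a.e.~$t\in[0,T]$ the quantity $\int|h(V_t^{(m_j)})|^2 d\|V_t^{(m_j)}\|$ has finite $\liminf$ in $j$. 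Fix such a $t$ and extract a (possibly $t$-dependent) subsequence, not relabelled, along which this integral is bounded by some finite $K(t)$.

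For such $t$, Cauchy--Schwarz yields a uniform total variation bound
$$\|\delta V_t^{(m_j)}\|(\R^2)\leq \bigl(\|V_t^{(m_j)}\|(\R^2)\bigr)^{1/2}K(t)^{1/2},$$
which, together with the mass bound \eqref{gmass} and the fact that each $V_t^{(m_j)}\in\mathbf{IV}_1(\R^2)$ by (V3') of Theorem \ref{exreg}, fulfills the hypotheses of Allard's compactness theorem. Passing to a further subsequence, $V_t^{(m_j)}$ converges as varifolds to some $V_t\in\mathbf{IV}_1(\R^2)$ whose first variation is absolutely continuous with respect to $\|V_t\|$ and whose generalized curvature lies in $L^2(\|V_t\|)$. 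By Proposition \ref{sel}, the full subsequence $\|V_t^{(m_j)}\|$ already converges to $\hat\mu_t$, so $\|V_t\|=\hat\mu_t$. Since a rectifiable varifold is uniquely determined by its weight measure (as recalled in Section 2 just after \eqref{defrecti1varifold}), the varifold $V_t$ is unique.

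The bound \eqref{h2} follows from lower semicontinuity applied twice. At each fixed admissible $t$, the uniform first variation bound combined with the weak convergence $\delta V_t^{(m_j)}\rightharpoonup \delta V_t$ and a standard lower semicontinuity argument for the $L^2$-norm of the curvature under varifold convergence (cf.~\cite{simon1983lectures}) gives
$$\int_{\R^2}|h(V_t)|^2\,d\|V_t\|\leq \liminf_{j\to\infty}\int_{\R^2}|h(V_t^{(m_j)})|^2\,d\|V_t^{(m_j)}\|.$$
Integrating in $t$ and applying Fatou's lemma combined with \eqref{unifHbd} yields \eqref{h2}.

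The main technical point is the pointwise-in-$t$ application of Allard's compactness theorem along a $t$-dependent subsequence, and the subsequent verification that this sub-subsequence limit coincides (as a weight measure) with the $j$-limit $\hat\mu_t$ furnished by Proposition \ref{sel}. Once this reconciliation is in place, the integrality of $V_t$, absolute continuity of $\|\delta V_t\|$, the $L^2$-integrability of $h(V_t)$, and the quantitative bound \eqref{h2} all follow from the corresponding properties of the approximate flows and standard lower semicontinuity.
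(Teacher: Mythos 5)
Your proposal is correct and follows essentially the same route as the paper: Fatou's lemma in time on the uniform curvature bound, Allard's compactness along a $t$-dependent subsequence, identification of the limit's weight with $\hat\mu_t$ from Proposition \ref{sel} (whence uniqueness, since an integral varifold is determined by its weight), and then lower semicontinuity of the $L^2$ curvature plus Fatou's lemma in $t$ for \eqref{h2}. The only cosmetic difference is that the paper makes the $t$-dependent sub-subsequence achieve the $\liminf$ explicitly and writes out the duality estimate \eqref{hint} where you invoke the standard lower semicontinuity fact.
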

\begin{proof} 
Taking $\liminf$ in \eqref{Bra1} with $m=m_j$, $t_1=0$, $t_2=T$ and using Fatou's lemma, we have  
\begin{equation}\label{tonegawabookequ3.8} 
\liminf_{j \rightarrow \infty} \int_{\R^2}\left|h\left(V_s^{(m_j)}\right)\right|^2 \,d\|V_s^{(m_j)}\|<\infty\text{ for } a.e. ~s \in[0, T].
\end{equation}
On the other hand, we have  
\begin{align}
\|\delta V_s^{(m_j)}\|(\mathbb R^2)&= \int_{\mathbb R^2}|h(V_s^{(m_j)})| \,d\|V_s^{(m_j)}\| \nonumber\\
&\leq\left(\int_{\mathbb R^2}|h(V_s^{(m_j)})|^2 \,d\|V^{(m_j)}_s\|\right)^{\frac{1}{2}}(\|V_s^{(m_j)}\|(\mathbb R^2))^{\frac{1}{2}}.
\end{align}
With these two assertions and \eqref{gmass}, we can apply     Allard's compactness theorem for  integral varifolds \cite[Sec.~6.4]{allard1972first} to a.e.~$s \in[0, T]$ so that there exists a subsequence $ \{m_j^{\prime} \} \subset\left\{m_j\right\}$ and $V_s \in \mathbf{IV}_1(\R^2)$ such that $  V_s^{(m_j^{\prime})}\xrightarrow{j \rightarrow \infty}V_s$ as varifolds. Note that the choice of the subsequence $m_j'$ may depend on $s$. On the other hand, we know that $  \|V_s^{(m_j)} \|\xrightarrow{j \to\infty} \hat{\mu}_s$ as 
measures on $\mathbb R^2$, so we have $\hat{\mu}_s=\|V_s\|$. For an integral 1-varifold $V_s$, there exist a countably $1$-rectifiable set $M_s \subset \mathbb{R}^2$ and a function $\theta_s: M_s \rightarrow \mathbb{N}$ such that $V_s={\bf var}(M_s,\theta_s)$ and $\|V_s\|=\theta_s \mathcal{H}^1\lfloor_{M_s}$, which is $\hat{\mu}_s$. 
Once we know that $\hat{\mu}_s$ is of this form, then $\hat{\mu}_s$ uniquely determines $V_s$. So for a.e. $s \in[0, T]$, we have a unique $V_s \in \mathbf{I V}_1(\R^2)$ such that $\|V_s\|=\hat\mu_s$. We define $V_s \in \mathbf{V}_1(\R^2)$ for other $s \in[0, T]$ so that $\|V_s\|=\hat{\mu}_s$, for example, $V_s(\phi)=\int_{\R^2} \phi (x, S_0 ) \,d \hat{\mu}_s(x)$ for $\phi \in C_c (G_1(\R^2) )$,
where $S_0 \in \mathbf{G}(2, 1)$ is a fixed element. \footnote{Note that this is not a rectifiable varifold. However,  the measure of such a set of times is zero.} The resulting family $\left\{V_s\right\}_{s \in[0, T]}$ belongs to $\mathbf{IV}_1(\R^2)$ for a.e.~$s \in[0, T]$ and satisfies the claimed convergence properties.
The existence of $h(V_s)\in L^2(\|V_s\|)$ follows from the 
standard argument: By the varifold convergence,
one can prove that, for any $g\in C_c^1(\mathbb R^2;\mathbb R^2)$,
\begin{equation}\label{hint}
\begin{split}
    \delta V_s (g)&=\lim_{j\rightarrow\infty}
    \delta V_s^{(m_j')}(g)\leq \lim_{j\rightarrow\infty}\left(\int_{\mathbb R^2}|h(V_s^{(m_j')})|^2 \,d\|V^{(m_j')}_s\|\right)^{\frac{1}{2}}\left(\int_{\mathbb R^2}|g|^2\,d\|V_s^{(m_j')}\|\right)^{\frac{1}{2}}\\
&=\left(\liminf_{j\rightarrow\infty}\int_{\mathbb R^2}|h(V_s^{(m_j)})|^2 \,d\|V^{(m_j)}_s\|\right)^{\frac{1}{2}}\left(\int_{\mathbb R^2}|g|^2\,d\|V_s\|\right)^{\frac{1}{2}}.
    \end{split}
\end{equation}
In the last line, we have taken   a subsequence $\{m_j'\}\subset\{m_j\}$ so that the limit of $\int |h(V_s^{(m_j')})|^2\,d\|V_s^{(m_j')}\|$ achieves the $\liminf$ (left-hand side of \eqref{tonegawabookequ3.8}). The inequality \eqref{hint} shows the absolute continuity of the total
variation measure $\|\delta V_s\|$, which shows 
the existence of $h(V_s)\in L^2(\|V_s\|)$. By integrating 
\eqref{hint} in time, by Fatou's lemma and \eqref{h2} satisfied
by $V_s^{(m)}$,
we also have \eqref{h2} for
$h(V_s)$ as claimed.
\end{proof}
\begin{proposition}\label{junct}
The property of (V3) holds for $V_t$. 
\end{proposition}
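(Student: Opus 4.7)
The plan is to inherit property (V3) from the approximating flows $V_t^{(m_j)}$ by combining the varifold convergence of Proposition \ref{l2} with a $W^{2,2}$ compactness argument on the individual curve pieces. First, I would fix $t$ in a set of full measure on which, simultaneously, $V_t^{(m_j')}\to V_t$ as varifolds with $V_t\in\IV_1(\R^2)$ (Proposition \ref{l2}); the integral $\int_{\R^2}|h(V_t^{(m_j')})|^2\,d\|V_t^{(m_j')}\|$ stays uniformly bounded along the subsequence (by \eqref{tonegawabookequ3.8}); and each $V_t^{(m_j')}$ itself satisfies (V3), i.e., admits a decomposition $V_t^{(m_j')}=\sum_{i=1}^{P_j(t)}\mathbf{var}(\ell_i^{(m_j')}(t),1)$ into connected embedded $W^{2,2}$ curves meeting at junctions with angles in $\{0,60,120\}$ degrees. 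The last item is supplied by (V3$'$) of Theorem \ref{exreg} applied with the smooth forcing $u^{(m_j')}$.

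Second, I would arclength-parametrize each $\ell_i^{(m_j')}(t)$. Uniform control of length and of $\int|h|^2$ along each curve gives a uniform $W^{2,2}$ bound, so Rellich--Kondrachov produces a further subsequence converging in $C^1$ (and weakly in $W^{2,2}$) to a $W^{2,2}$ limit curve. Endpoints, triple-junction points, and unit tangents at endpoints all converge, so the junction angle condition---being a closed condition in $\{0,60,120\}$ degrees---is preserved. The rectifiable varifold assembled from the limit curves, with multiplicities summed whenever two approximating curves coalesce onto the same limit, must coincide with $V_t$ by the uniqueness of the integral structure given $\|V_t\|=\hat\mu_t$ established in Proposition \ref{l2}, yielding the decomposition \eqref{curvar}.

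Third, I would argue finiteness of the collection $\{\ell_i(t)\}$ and specialize to $N=2$. Any approximating curve whose length converges to zero is simply discarded; the remaining ones inherit a definite lower bound on length from the embedded $W^{2,2}$ structure (pointwise $C^1$ convergence plus the Meyer--Ziemer type local density bound of Proposition \ref{length}), and the uniform mass bound \eqref{gmass} then bounds $P(t)$. For $N=2$, each $\ell_i^{(m_j')}(t)$ is already embedded and closed with only tangential self-intersections, and $C^1$ convergence preserves closedness, embeddedness, and tangentiality of any limiting contacts.

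The main obstacle I anticipate is controlling the behavior of triple junctions in the limit: several distinct junctions in the approximations can collapse to a single degenerate junction (the ``infinitesimal hexagon'' phenomenon mentioned after (V3)), and distinct curves can glue along tangential contacts, potentially producing higher-multiplicity structures. Ensuring that the resulting limit still fits the allowed junction model, rather than producing unrestricted multi-junctions, requires the fine-scale structure theorem for integral $1$-varifolds with $L^2$ curvature together with the junction constraints inherited from $V_t^{(m_j')}$, which I would invoke from \cite{kim2017mean,Kim-Tone2,ST-canonical,MR4158524}.
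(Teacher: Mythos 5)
Your strategy (global arclength parametrization of each approximating curve plus $W^{2,2}$ compactness) is genuinely different from the paper's, which works \emph{locally}: for a.e.\ $t$ it classifies tangent cones of the limit $V_t$ at each point of the support into ``line'' and ``non-line'' cases and runs a blow-up argument. Unfortunately your route has a gap exactly at the point you flag as ``the main obstacle,'' and that obstacle is not something you can outsource to a citation --- it is the content of the proof. Concretely: (i) the number of curves $P_j(t)$ and of junctions of $V_t^{(m_j')}$ is \emph{not} uniformly bounded in $j$. Your proposed fix --- discard curves of vanishing length and claim a length lower bound for the rest --- works for closed curves (Fenchel plus Cauchy--Schwarz gives $L_i\geq 4\pi^2/\int_{\ell_i}|h|^2$), but fails for the open arcs joining two junctions: a short, nearly straight segment between two triple junctions has arbitrarily small length \emph{and} arbitrarily small curvature integral, so arbitrarily many of them can coexist under the uniform mass and $L^2$-curvature bounds, and their total length need not vanish. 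Hence the combinatorics of the decomposition need not stabilize along any subsequence, and the curve-by-curve diagonal extraction does not close up. Finiteness of $P(t)$ must instead be extracted from the structure of the \emph{limit}, which is what the paper's discreteness argument for non-line tangent-cone points provides. (ii) When junctions of $V_t^{(m_j')}$ collapse onto a point where $V_t$ has a line tangent cone, the assembled limit would acquire a spurious singular point; the paper excludes this by the scaling identity
\begin{equation*}
\int_{B_2}|h(\tilde V)|^2\,d\|\tilde V\| = r_i\int_{B_{2r_i}}|h(V_s^{(m_j')})|^2\,d\|V_s^{(m_j')}\|\longrightarrow 0,
\end{equation*}
which forces the rescaled approximating curves to be $C^1$-close to straight segments, incompatible with a $120$-degree junction confined to an $o(r_i)$-neighborhood of a line. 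No such quantitative step appears in your proposal, and the ``fine-scale structure theorem for integral $1$-varifolds with $L^2$ curvature'' you invoke is not available in the cited references in the form you need.

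A secondary issue: preservation of the angle set $\{0,60,120\}$ under ``closedness of the condition'' only applies when one junction converges to one junction. When a cluster of junctions collapses (the degenerate/hexagonal case), the limiting angles are not limits of individual junction angles, and one must again argue via the rescaled picture that the outgoing tangent directions of the surviving arcs differ by multiples of $60$ degrees. Your $N=2$ discussion, by contrast, is essentially sound: a $C^1$ limit of embedded curves cannot acquire a transversal self-intersection (two transversal nearly straight arcs through a small ball must cross), so closedness, embeddedness and tangentiality do pass to the limit there.
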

\begin{proof}
For a.e.~$s$, since $h(V_s)\in L^2(\|V_s\|)$, we have the monotonicity formula (see \cite[17.7]{simon1983lectures},\cite[8.1]{allard1972first}) 
at each point of ${\rm spt}\,\|V_s\|$ and
there exist stationary tangent cones (see \cite{simon1983lectures} for the definition of tangent cone and the basic properties).
Note that each tangent cone consists 
of half-lines with possible integer multiplicities.
Thus
either of the following two cases holds:
\begin{itemize}
\item[(a)] There is at least one tangent cone consisting of a 
single line with integer multiplicity. 
\item[(b)] 
Any tangent cone is a union of half-lines
emanating from the origin (whose union is not a single line). 
\end{itemize}
Consider the case (a) first, and without loss of generality, assume that the point in question is the origin $x=0$. If the density is equal to one, then 
the Allard regularity theorem shows that
${\rm spt}\|V_s\|$ can be represented by
a $C^{1,1/2}$ curve with the multiplicity equal to one 
in a neighborhood of the origin. Since the
curvature is in $L^2$, we immediately deduce that
it is a $W^{2,2}$ curve. Thus we may assume 
that the density at $0$ is an integer larger than one in the 
following. Let $V_s^{(m_j')}$ be the subsequence in the proof of Proposition \ref{l2} with uniform $L^2$ bound of $h(V_s^{(m_j')})$. Let $\{r_i\}$ be a sequence converging to $0$ and such that the rescaling of
$V_s$ by $1/r_i$ converges to a line $L$ with 
integer multiplicity. We claim that for all 
sufficiently large $i$ and $j$, $B_{r_i}$ does
not contain any junction of $V_s^{(m_j')}$.
Here, a ``junction'' means a location where curves 
meet with non-zero angles. 
For a contradiction, assume otherwise. Then, 
there must exist subsequences (without changing
the indices) such that $B_{r_i}$ contains at least 
one junction of $V_s^{(m_j')}$ and we may also
choose a further subsequence so that the Hausdorff
distance between $L$ (the limit tangent line) and ${\rm spt}\|V_s^{(m_j')}\|$ in $B_{2r_i}$ is $o(r_i)$ . 
Then consider the change
of variables $\tilde x=x/r_i$ and the corresponding
varifolds under this change of variable as $\tilde V_s^{(m_j')}$. We then have
\begin{equation*}
\int_{B_2}|h(\tilde V_s^{(m_j')})|^2\,d\|\tilde V_s^{(m_j')}\|=
r_i\int_{B_{2r_i}}|h(V_s^{(m_j')})|^2\,d\|V_s^{(m_j')}\|\leq 
r_i\int_{\mathbb R^2}|h(V_s^{(m_j')})|^2\,d\|V_s^{(m_j')}\|
\end{equation*}
and the right-hand side converges to $0$ as 
$i\rightarrow \infty$. Thus, for all sufficiently
large $i$, the $W^{2,2}$ curves consisting of
${\rm spt}\|V_s^{(m_j')}\|$ are close to straight line 
segments in $C^1$ topology. On the other hand, at a junction
in $B_1$, there must be at least two curves
emanating from there having an angle of
$120$ degrees. One can draw a conclusion that 
such a set of curves cannot be contained in 
a small neighborhood of a line
for all sufficiently large $i$, and we 
arrive at a contradiction. Thus, for some 
$r_i$ and all sufficiently large $j$, $V_s^{(m_j')}$ is free of junctions
in $B_{r_i}$ with small $L^2$ curvatures,
and can be represented as a union of graphs
of functions
$f_1\leq \ldots\leq f_l$ in $W^{2,2}$ close to a line. Then one can prove that the limit 
$V_s$ in $B_{r_i}$ may be 
expressed similarly. This gives a 
desired description at a point belonging to the
case (a). Next, let us consider the case (b).
First note that the points of (b) are 
discrete point. If 
not, then there must be an accumulation 
point which must be also in (b) since 
the first part of the argument shows the 
absence of a  junction in a
neighborhood if it is in (a). Let $x_i\rightarrow \hat x$ be a sequence of converging points
of (b). Then, let $r_i:=|x_i-\hat x|$ and consider the rescaling of $V_s$ centered at $\hat x$ by $1/r_i$. By the 
definition of (b), it converges to
some tangent cone which is not a line. On the
other hand, in a small neighborhood of $\tilde x:=\lim_{i\rightarrow\infty}(x_i-\hat x)/|x_i-\hat x|$ (after choosing a convergent  subsequence), the 
support of the rescaled $V_s$ converges to a line since the
limit is a sum of half lines.
Since the rescaled $V_s^{(m_j')}$ is converging
and is close to a line, the first part of the
argument shows that the rescaled 
$V_s^{(m_j')}$ in a small 
neighborhood of $\tilde x$ is
free from   junctions for all sufficiently
large $j$, thus $V_s$ near $x_i$ is 
a point with the same description in the case of (a). This is a contradiction, so the points of (b)
are discrete. At such a point, there exists 
a neighborhood in which the point belonging to (b) does not exist except at $x=0$, and ${\rm spt}\|V_s\|\setminus\{0\} $ 
consists of points in (a). By the description of (a) and choosing a small neighborhood, one can argue that ${\rm spt}\|V_s\|$ is
represented as a finite number of $W^{2,2}$ curves
reaching to the junction point with small $C^{1}$ variations (note that the
curvature bound gives $C^{1,1/2}$ 
control of the curves). The angles of intersection of these curves
must be either $0$, $60$ or $120$ degrees
due to the fact that $V_s^{(m_j')}$ is converging
with the same property of junctions and in the uniformly bounded 
$C^{1,1/2}$ norms of curves controlled. For $N=2$,
since $V_s^{(m_j')}$ has no junction, the $V_s$
cannot have any point of (b). This completes the
proof.
\end{proof}
\begin{proposition}
    The function $u$ satisfies
    for any $T<\infty$
    \begin{equation}\label{uapp1}
\lim_{j\rightarrow\infty}\int_0^T\int_{\R^2}|u^{(m_j)}-u|^2\,d\|V_t\|dt=
\lim_{j\rightarrow\infty}\int_0^T\int_{\R^2}|u^{(m_j)}-u|^2\,d\|V^{(m_j)}_t\|dt=0
\end{equation}
and with $\Cr{c1}(u,T)$ as in \eqref{NS1}, we have
    \eqref{gaa1} satisfied for 
    $u$, i.e., 
     \begin{equation*}
\int_{0}^{T}\int_{\R^2}|u|^2\,d\|V_t\|dt \leq 4\sqrt{\Cr{c1}\Cr{MZ}}\|V_0\|(\R^2)\sqrt{1+\Cr{MZ}\Cr{c1}\exp(\Cr{MZ}\Cr{c1})}\exp(\Cr{MZ}\Cr{c1}/2).
\end{equation*}
\end{proposition}
\begin{proof}
For \eqref{uapp1}, first note that Theorem
\ref{MZ} holds true equally for $\phi\in W^{1,2}(\R^{n+1})$ by approximation. Thus, computing as 
in \eqref{u2Vt} and using \eqref{lenest2} and \eqref{uconin},
\begin{equation*}
\begin{split}
    &\int_{0}^{T}\int_{\R^2}|u^{(m_j)}-u|^2\,d\|V_t\|dt\leq \sqrt{\Cr{MZ}}\int_{0}^{T}\mathfrak{D}(\|V_t\|)\int_{\R^2}|\nabla |u^{(m_j)}-u|^2|\,dxdt \\
    &\leq 2\sqrt{\Cr{MZ}}\int_{0}^T(\|V_t\|(\R^2))^{\frac12}\big(\int_{\R^2}|h(V_t)|^2\,d\|V_t\|\big)^{\frac12}
    \int_{\R^2} |u^{(m_j)}-u||\nabla(u^{(m_j)}-u)|\,dxdt
    \end{split}\end{equation*}
    \begin{equation}
        \begin{split}
    &\leq 4\sqrt{\Cr{MZ}}(\sup_{t\in[0,T]}\|V_t\|(\R^2))^{\frac12}\sup_{t\in[0,T+1]}\|u(\cdot,t)\|_{L^2(\R^2)} \big(\int_0^T\int_{\R^2}|h(V_t)|^2\,d\|V_t\|dt\big)^{\frac12}\\ 
    &\hspace{1cm}\big(\int_{0}^T\int_{\R^2}|\nabla (u^{(m_j)}-u)|^2\,dxdt\big)^{\frac12}.
    \end{split}\label{l2con}
\end{equation}
By \eqref{ucon1}, this converges to $0$
as $j\rightarrow \infty$. The same 
computation with respect to $V_t^{(m_j)}$ 
holds equally, thus we obtain \eqref{uapp1}.
Since $u^{(m_j)}$ satisfies \eqref{gaa1}, 
the same inequality holds for $u$ due to the convergence
of $\|V_t^{(m_j)}\|\rightarrow
\|V_t\|$.
\end{proof}
\begin{proposition}
  $\{V_t\}_{t\geq 0}$ satisfies the
  inequality \eqref{bra}. 
\end{proposition}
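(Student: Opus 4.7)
The plan is to apply the Brakke-type inequality \eqref{bra-n} from Theorem~\ref{exreg} to the approximating flow $(V_t^{(m_j)}, u^{(m_j)})$ and pass to the limit $j\to\infty$ on each side. Expanding the integrand $\{\nabla\phi-\phi h\}\cdot\{h+u^\perp\}$ as $\nabla\phi\cdot h+\nabla\phi\cdot u^\perp-\phi|h|^2-\phi h\cdot u^\perp$, the two boundary terms on the left and the $\partial_t\phi$-term on the right will converge via Proposition~\ref{sel} and the dominated convergence theorem, thanks to the uniform mass bound \eqref{gmass}.

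The decisive structural input I would use is Proposition~\ref{l2}: because the integral varifold $V_s$ is uniquely determined by its weight measure $\hat\mu_s$ for a.e.~$s$, every varifold-subsequential limit of $\{V_s^{(m_j)}\}$ must coincide with $V_s$, hence $V_s^{(m_j)}\to V_s$ as varifolds (not merely as measures) along the full subsequence $\{m_j\}$ for a.e.~$s\in[0,T]$. At such times I would treat the linear curvature term by rewriting $\int \nabla\phi\cdot h(V_s^{(m_j)})\,d\|V_s^{(m_j)}\| = -\delta V_s^{(m_j)}(\nabla\phi) = -\int D(\nabla\phi)\cdot S\,dV_s^{(m_j)}(x,S)$; the integrand is bounded and continuous on $G_1(\R^2)$, so pointwise-in-$s$ varifold convergence together with dominated convergence in $s$ (with majorant $\|\phi\|_{C^2}\sup_{s,j}\|V_s^{(m_j)}\|(\R^2)$) delivers the time-integrated limit. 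For the quadratic $-\phi|h|^2$ piece, varifold convergence combined with the a.e.-in-$s$ uniform $L^2$-bound on $h(V_s^{(m_j)})$ yields Hutchinson-type lower semicontinuity of $\int\phi|h(V_s)|^2\,d\|V_s\|$; Fatou in time then produces the correctly-signed inequality after multiplication by $-1$.

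For the $u^\perp$-contributions I would use the splitting $u^{(m_j)\perp}-u^\perp = S^\perp_{V_s^{(m_j)}}(u^{(m_j)}-u)+(S^\perp_{V_s^{(m_j)}}-S^\perp_{V_s})u$, reducing the convergence to (a) the strong convergence $u^{(m_j)}\to u$ in $L^2(d\|V_t^{(m_j)}\|\,dt)$ guaranteed by \eqref{uapp1}, and (b) the $\|V_t\|$-a.e.~convergence of the tangent-plane projections implied by varifold convergence at a.e.~$t$. For the bilinear cross term $-\phi h\cdot u^\perp$, where the most delicate work sits, I would invoke Brakke's perpendicularity \eqref{tonegawakimequ2.3} to rewrite $h\cdot u^\perp=h\cdot u$, eliminating the $V^{(m_j)}$-dependence of the projection and reducing the task to pairing the weakly-$L^2$-convergent curvature $h(V^{(m_j)})$ against the strongly-$L^2(d\|V^{(m_j)}\|\,dt)$-convergent vector field $u^{(m_j)}$ via Cauchy--Schwarz and \eqref{uapp1}.

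The main obstacle I anticipate is precisely this weak--strong pairing in the bilinear term: varifold convergence yields only weak $L^2$-type convergence of $h(V^{(m_j)})$, so the strong $L^2$-convergence of $u^{(m_j)}$ along the moving varifolds supplied by \eqref{uapp1}, coupled with Brakke's perpendicularity \eqref{tonegawakimequ2.3} to remove the $V$-dependence of $u^\perp$, are both essential to close the argument. Once each of the five pieces has been handled as above, summing the limiting inequalities produces the desired Brakke inequality \eqref{bra}.
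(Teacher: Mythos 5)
Your overall architecture is the same as the paper's (pass to the limit in \eqref{bra-n}, first-variation trick for the terms linear in $h$, lower semicontinuity plus Fatou for $\int\phi|h|^2$, and \eqref{uapp1} for the forcing), and you correctly identify the bilinear term $\phi\,h\cdot u^\perp$ as the delicate one. But two of your steps do not hold as stated. First, the claim that $V_s^{(m_j)}\to V_s$ as varifolds \emph{along the full subsequence} for a.e.\ $s$ is unjustified: for a.e.\ $s$ one only knows $\liminf_j\int|h(V_s^{(m_j)})|^2\,d\|V_s^{(m_j)}\|<\infty$, so Allard's compactness (and hence rectifiability and integrality of the limit) is available only along a further, $s$-dependent subsequence with bounded first variation. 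Along a subsequence where the $L^2$ curvature blows up, the weak-$*$ limit in $\mathbf{V}_1(\R^2)$ still has weight $\hat\mu_s$ but need not be rectifiable, so the ``uniquely determined by its weight measure'' argument of Proposition \ref{l2} does not force it to equal $V_s$. Consequently your plan to treat the linear term $\int\nabla\phi\cdot h\,d\|V_s^{(m_j)}\|=-\int D(\nabla\phi)\cdot S\,dV_s^{(m_j)}$ by pointwise-in-$s$ convergence plus dominated convergence in $s$ breaks down; the paper instead groups \emph{all} the varifold-dependent terms into a single integrand bounded below, chooses for each $s$ a subsequence that simultaneously converges as varifolds and achieves the $\liminf$ of that combined integrand, and then applies Fatou in time. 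Second, your splitting $u^{(m_j)\perp}-u^\perp=S^\perp_{V^{(m_j)}}(u^{(m_j)}-u)+(S^\perp_{V^{(m_j)}}-S^\perp_{V})u$ invokes ``$\|V_t\|$-a.e.\ convergence of the tangent-plane projections,'' which varifold convergence does not provide: it is only weak-$*$ convergence of measures on $G_1(\R^2)$, so one may pass to the limit in $\int\nabla\phi\cdot S^\perp(g)\,dV^{(m_j)}(x,S)$ only for a \emph{continuous} $g$, not for $g=u$.

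Both gaps are repaired by the single device the paper uses and which is absent from your sketch: fix $\varepsilon>0$ and a smooth $g:=u^{(m_{j_0})}$ with $\int_0^T\int|g-u|^2\,d\|V_t\|dt<\varepsilon$ and likewise along $\|V_t^{(m_j)}\|$ (both from \eqref{uapp1}). Replacing $u$ and $u^{(m_j)}$ by $g$ in the terms $\nabla\phi\cdot u^\perp$ and $\phi\,h\cdot u$ costs only $O(\sqrt\varepsilon)$ by Cauchy--Schwarz and \eqref{ga1}, and after the replacement the integrands $\nabla\phi\cdot S^\perp(g)$ and $\nabla(\nabla\phi-g\phi)\cdot S$ are continuous on $G_1(\R^2)$, so they pass to the limit under varifold convergence along the $s$-dependent subsequence. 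This is exactly the rigorous version of the ``weak--strong pairing'' you describe, and without it the pairing of the weakly convergent $h(V^{(m_j)})\,d\|V^{(m_j)}\|$ against the merely measurable $u$ cannot be closed. Your use of Brakke's perpendicularity \eqref{tonegawakimequ2.3} to write $h\cdot u^\perp=h\cdot u$ is correct and is implicitly used in the paper as well.
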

\begin{proof} We fix $T>0$,
$0\leq t_1<t_2\leq T$ and $\phi\in C_c^2(\R^2\times[0,T])$ with $\phi\geq 0$. Once this case is treated, the 
$C_c^1$ case can be proved by 
approximation. 
Let $\{m_j\}$ be as in Proposition 
\ref{sel}. 
Given $\varepsilon>0$, we fix $j_0$ so that
\begin{equation}\label{gerror}
    \int_0^T\int_{\R^2}|u^{(m_j)}-u|^2\,d\|V_t\|dt<\varepsilon
\end{equation}
when $j\geq j_0$ by using \eqref{uapp1}.
For notational simplicity, set $g=u^{(m_{j_0})}$. Now fix $s\in[0,T]$
such that \eqref{tonegawabookequ3.8} holds, and 
let a subsequence $ \{m_j^{\prime} \}_{j \in \mathbb{N}}$ be such that   
\begin{subequations}
\begin{align}
&V_s^{(m_j^{\prime})} \rightarrow V_s\text{ as varifolds},\label{mjprime}\\
&\lim_{j \rightarrow \infty} \int_{\R^2} \phi|h(V_s^{(m_j')})|^2-\nabla\phi\cdot g^\perp-h(V_s^{(m_j')}) \cdot (\nabla \phi-g \phi)\,d\|V_s^{(m_j')}\|\nonumber\\
=&\liminf_{j \rightarrow \infty} \int_{\R^2}\phi|h(V_s^{(m_j)})|^2-\nabla\phi\cdot g^\perp-h(V_s^{(m_j)}) \cdot (\nabla \phi-g \phi) \,d\|V_s^{(m_j)}\|.\label{achievesliminf}
\end{align}
\end{subequations}
Note that the choice of subsequence may depend on $s$. 
Using \eqref{tonegawakimequ2.2a}, \eqref{tonegawakimequ2.2} and \eqref{mjprime}, we can deduce
\begin{equation}\label{achievesliminf1}
\begin{aligned}
\lim_{j \rightarrow \infty} &\int_{\R^2}\nabla\phi\cdot g^\perp \, d\|V_s^{(m_j')}\|+\lim_{j \rightarrow \infty} \int_{\R^2}  h(V_s^{(m_j')}) \cdot (\nabla \phi-g \phi)\,d\|V_s^{(m_j')}\|\\
=&\int_{\R^2}\nabla\phi\cdot g^\perp \, d\|V_s\| -  \int_{G_1(\R^2)} \nabla(\nabla \phi-g \phi)\cdot S\,dV_s.
\end{aligned}
\end{equation}
By the similar argument for \eqref{hint}, 
we have
$$\int_{\R^2} \phi|h(V_s)|^2  \,d \|V_s \| \leq \lim_{j \rightarrow \infty} \int_{\R^2}\phi|h (V_s^{(m_j')})|^2  \,d \|V_s^{(m_j')}\|,$$
thus with \eqref{achievesliminf} and \eqref{achievesliminf1},
we find
\begin{equation}
\begin{split}
\int_{\R^2} &\phi|h(V_s)|^2-\nabla\phi\cdot g^\perp-h(V_s) \cdot (\nabla \phi-g \phi) \,d \|V_s \| \\
\leq & \liminf_{j \rightarrow \infty} \int_{\R^2}\phi|h (V_s^{(m_j)} )|^2-\nabla\phi\cdot g^\perp-h(V_s^{(m_j)}) \cdot (\nabla \phi-g \phi)   \,d \|V_s^{(m_j)} \|.
\end{split}
\end{equation}
Since $\phi|h|^2-|\nabla\phi||g|-|h|(|\nabla \phi|+|g|\phi)\geq -|\nabla\phi||g|-\frac{|\nabla\phi|^2}{\phi}-|g|^2\phi$ which is bounded from below by a constant depending only on $g$ and $\phi$, we may use Fatou's lemma 
to conclude that 
\begin{equation}\label{tonegawabookequ3.9}
\begin{aligned}
&\int_{t_1}^{t_2}\int_{\R^2} \phi|h(V_s)|^2-\nabla\phi\cdot g^\perp-h(V_s) \cdot (\nabla \phi-g \phi) \,d \|V_s \|ds \\
&\leq \liminf_{j \rightarrow \infty}\int_{t_1}^{t_2} \int_{\R^2} \phi|h(V_s^{(m_j)})|^2-\nabla\phi\cdot g^\perp-h(V_s^{(m_j)}) \cdot (\nabla \phi-g \phi) \,d\|V_s^{(m_j)}\|ds.
\end{aligned}
\end{equation}
Since each $V^{(m_j)}$ satisfies
\eqref{bra-n} with $u^{(m_j)}$, 
\begin{equation}\label{tonegawabookequ3.11}
\begin{aligned}
& \liminf _{j \rightarrow \infty} \int_{t_1}^{t_2} \int_{\R^2} \phi|h(V_s^{(m_j)})|^2-\nabla\phi\cdot g^\perp-h(V_s^{(m_j)}) \cdot (\nabla \phi-g \phi)\, d\|V_s^{(m_j)}\|ds \\
&
= \liminf _{j \rightarrow \infty}\big( \|V_{t_1}^{(m_j)} \|\left(\phi\left(\cdot, t_1\right)\right)- \|V_{t_2}^{(m_j)} \|\left(\phi\left(\cdot, t_2\right)\right)+\int_{t_1}^{t_2} \int_{\R^2} \partial_t\phi \,d\|V_s^{(m_j)}\|ds  \\
& \quad\quad +\int_{t_1}^{t_2} \int_{\R^2}    \big(\nabla \phi \cdot (u^{(m_j)}-g)^\perp+\phi\, h(V_s^{(m_j)}) \cdot (g-u^{(m_j)})\big)\,d\|V_s^{(m_j)}\|ds \big)\\
& =\left\|V_{t_1}\right\|\left(\phi\left(\cdot, t_1\right)\right)-\left\|V_{t_2}\right\|\left(\phi\left(\cdot, t_2\right)\right)+\int_{t_1}^{t_2} \int_{\R^2} \partial_t\phi \,d \|V_s \| d s\\
&\qquad + \liminf _{j \rightarrow \infty}\int_{t_1}^{t_2} \int_{\R^2}    \Big(\nabla \phi \cdot (u^{(m_j)}-g)^\perp+\phi\, h(V_s^{(m_j)}) \cdot (g-u^{(m_j)})\Big)\,d\|V_s^{(m_j)}\| ds\end{aligned}
\end{equation}
where in the last step we have  used    $\|V_s^{(m_j)}\|\to  \|V_s \|$ as Radon measures for all $s$. For the integrals in the last line of \eqref{tonegawabookequ3.11}, we may use \eqref{uapp1} and \eqref{gerror} to conclude that they 
are bounded by a multiple of $\varepsilon$. Similarly, we may replace $g$ on the first line of \eqref{tonegawabookequ3.9} by $u$ with 
a similar error. By letting
$\varepsilon\rightarrow 0$, this gives the desired
inequality \eqref{bra} for $V_t$.
\end{proof}
\begin{remark}
 For the family of varifolds 
 $\{V_t\}_{t\geq 0}$ thus far 
 constructed, from (V1') and from the method of
 construction, (V1) is satisfied. 
 We have proved so far (V3)-(V7)
 and (V2) will be proved in
 Remark \ref{re1}.
 \end{remark}
\begin{proposition}
There exists a subsequence of $\{m_j\}$ (we use the same notation
for this subsequence) with the following property:
    For each $i=1,\ldots,N$, there
    exists a set $S(i)\subset\R^2\times[0,\infty)$
    with $\|\nabla \chi_{S(i)}\|(\R^2\times[0,T])<\infty$ for 
    all $T>0$ such that,
    writing $E_i(t):=\{x\in\R^2 : (x,t)\in S(i)\}$,
    \begin{equation}\label{concha}
\lim_{j\rightarrow\infty}\int_{B_R}|\chi_{E^{(m_j)}_i(t)}(x)-
        \chi_{E_i(t)}(x)|\,dx=0
    \end{equation}
    for all $R>0$ and for all $t\geq 0$. Moreover, (E5) holds true.
\end{proposition}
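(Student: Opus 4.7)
The plan is to establish uniform space-time BV bounds on $\chi_{S^{(m)}(i)}$, where $S^{(m)}(i):=\{(x,t):x\in E_i^{(m)}(t)\}$ (open by (E6')), and then extract a convergent subsequence via a pointwise-in-time Arzelà-Ascoli argument. Two ingredients, both uniform in $m$, are needed: (i) a bound on the spatial perimeter $\|\nabla\chi_{E_i^{(m)}(t)}\|(\R^2)$, and (ii) a $(1/2)$-Hölder modulus of continuity of $t\mapsto\chi_{E_i^{(m)}(t)}$ in $L^1(\R^2)$.

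For (i), (E5') yields $\|\nabla\chi_{E_i^{(m)}(t)}\|(\R^2)\leq\|V_t^{(m)}\|(\R^2)$, uniformly bounded via \eqref{goo3} together with the uniform control of $\Cr{c1}(u^{(m)},T)$ provided by \eqref{ucon1}--\eqref{uconin}. For (ii), I would test \eqref{gvel} with a time-independent $\phi\in C^1_c(\R^2)$ (multiplied by an irrelevant temporal cutoff equal to $1$ on $[t_1,t_2]$), and use $\mathcal H^1\lfloor\partial^*E_i^{(m)}(t)\leq\|V_t^{(m)}\|$ to obtain
\begin{equation*}
\Bigl|\int_{\R^2}(\chi_{E_i^{(m)}(t_2)}-\chi_{E_i^{(m)}(t_1)})\phi\,dx\Bigr|\leq \|\phi\|_\infty\int_{t_1}^{t_2}\int_{\R^2}\bigl(|h(V_t^{(m)})|+|u^{(m)}|\bigr)\,d\|V_t^{(m)}\|\,dt.
\end{equation*}
Two applications of Cauchy-Schwarz (first in $x$, then in $t$) together with \eqref{ga1}--\eqref{gaa1} and \eqref{ucon1}--\eqref{uconin} bound the right-hand side by $C\|\phi\|_\infty(t_2-t_1)^{1/2}$ with $C$ depending only on $\mathcal H^1(\Gamma_0)$ and $\Cr{c1}(u,T)$. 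Taking the supremum over $\phi\in C^1_c(\R^2)$ with $|\phi|\leq 1$ yields $|E_i^{(m)}(t_2)\triangle E_i^{(m)}(t_1)|\leq C(t_2-t_1)^{1/2}$.

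Given (i) and (ii), for each fixed $R,T>0$ and $i$, the family $\{t\mapsto\chi_{E_i^{(m)}(t)}|_{B_R}\}_m$ is precompact at every $t$ in $L^1(B_R)$ (BV compactness) and $(1/2)$-Hölder equicontinuous as a map $[0,T]\to L^1(B_R)$, so Arzelà-Ascoli furnishes a subsequence converging in $C([0,T];L^1(B_R))$. A diagonal extraction in $R,T\to\infty$ and over $i=1,\ldots,N$ produces a single subsequence $\{m_j\}$ realizing \eqref{concha} for every $t\geq 0$, $R>0$, and $i$. Each limit is a pointwise a.e.\ limit of characteristic functions, hence itself the characteristic function of some measurable set $E_i(t)$, defining $S(i)$. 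The bound $\|\nabla\chi_{S(i)}\|(\R^2\times[0,T])<\infty$ follows from BV lower semicontinuity applied to $\chi_{S^{(m_j)}(i)}$, whose spatial component is controlled by $T\sup_t\|V_t^{(m_j)}\|(\R^2)$ and whose temporal component, via the same Cauchy-Schwarz estimate as in (ii), is controlled by $CT^{1/2}$---both uniform in $j$. Finally, (E5) is obtained by letting $j\to\infty$ in the uniform estimate of (ii) and invoking \eqref{concha}. The main obstacle is really only the bookkeeping required to make all constants in (ii) genuinely $m$-independent, which is exactly what Proposition \ref{lengthcontrol} together with the mollifier convergence \eqref{ucon1}--\eqref{uconin} is designed to provide.
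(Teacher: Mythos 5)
Your proposal is correct and rests on exactly the same two uniform ingredients as the paper's proof: the perimeter bound from (E5') together with \eqref{goo3}, and the $(1/2)$-H\"older continuity of $t\mapsto|E_i^{(m)}(t)\triangle E_i^{(m)}(s)|$ obtained by testing (E7') with time-independent $\phi$ and applying Cauchy--Schwarz with \eqref{ga1}--\eqref{gaa1}. The only (cosmetic) difference is the order of the compactness extraction: you run Arzel\`a--Ascoli for maps $[0,T]\to L^1(B_R)$ directly, whereas the paper first applies space-time BV compactness to $\chi_{S^{(m)}(i)}$ (getting convergence for a.e.\ $t$ via Fubini) and then uses the same H\"older equicontinuity to upgrade to all $t$ --- both routes are valid and interchangeable here.
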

\begin{proof} We fix $i=1,\ldots,N$ and $T>0$ in the following. 
    Recall the definition $E_i^{(m)}(t)$ being the 
    ``$i$-th phase'' corresponding
    to the flow with $u^{(m)}$
    obtained by Theorem \ref{exreg}.
    Let $S^{(m)}(i)$ be the 
    set defined in (E6') corresponding to $E^{(m)}(t)$, 
    namely, 
    \begin{equation}
        S^{(m)}(i):=\{(x,t) : x\in E_i^{(m)}(t), t\geq 0\}
    \end{equation}
which is a relative open set in $\R^2\times[0,\infty)$. 
For any $g=(g^1,g^2,g^3)\in C_c^1(\R^2\times (0,T);\R^3)$, (E7') with $t_1=0$ and $t_2=T$ shows
\begin{equation}
\begin{split}
&\int_{S^{(m)}(i)}g^1_{x_1}+g^2_{x_2}+\frac{\partial g^3}{\partial t}\,dxdt=\int_0^T\int_{E^{(m)}_i(t)}
g^1_{x_1}+g^2_{x_2}+\frac{\partial g^3}{\partial t}\,dxdt\\ 
&=\int_0^T \int_{\partial^* E^{(m)}_i(t)}(g^1,g^2)\cdot \nu_i^{(m)}
-(h(V_t^{(m)})+u^{(m)})\cdot \nu_i^{(m)} g^3\,d\mathcal H^1 dt,
\end{split}
\end{equation}
where $\nu_i^{(m)}$ is the outer
unit normal to $\partial^* E_i^{(m)}(t)$. 
By (E5'), 
for all $0\leq t\leq T$,
\begin{equation}\label{esthu1}
    \mathcal H^1(\partial^* E_i^{(m)}(t))= \|\nabla \chi_{E_i^{(m)}(t)}\|(\R^2)
    \leq \|V_t^{(m)}\|(\R^2)\leq
    \sup_{s\in[0,T]}\|V_s^{(m)}\|(\R^2),
\end{equation}
and again by (E5'), 
\begin{equation}\label{esthu2}
\begin{split}
    \int_0^T\int_{\partial^* E_i^{(m)}(t)}&|h(V_t^{(m)})|+|u^{(m)}|\,d\mathcal H^1 dt\leq \int_0^T\int_{\R^2}|h(V_t^{(m)})|+|u^{(m)}|\,d\|V_t^{(m)}\|dt \\
    &\leq (T\sup_{s\in[0,T]}\|V_s^{(m)}\|(\R^2))^{\frac12}\big(\int_{0}^T\int_{\R^2}|h(V_t^{(m)})|^2+|u^{(m)}|^2\,d\|V_t^{(m)}\|dt\big)^{\frac12}.
\end{split}
\end{equation}
Since the right-hand sides of \eqref{esthu1} and \eqref{esthu2} are both bounded uniformly due to \eqref{goo3}-\eqref{gaa1},
this implies that $\|\nabla \chi_{S^{(m)}(i)}\|(\R^2\times (0,T))$ is uniformly bounded. 
By the compactness theorem for
bounded variation function applied to
the subsequence $\{m_j\}$ in 
Proposition \ref{sel}, we have a 
further subsequence (denoted by the 
same index) and a set of finite 
perimeter $S(i)\subset \R^2\times(0,T)$ such that $\chi_{S^{(m_j)}(i)}$ converges
to $\chi_{S(i)}$ locally in $L^1(\R^2\times(0,T))$. By Fubini
Theorem, we also may assume that 
$\chi_{E_i^{(m_j)}(t)}$ converges
locally to $\chi_{E_i(t)}$ in $L^1
(\R^2)$ and for 
a.e.~$t\in (0,T)$. In fact, since
(E7') holds equally true for 
$\phi\in C_c(\R^2)$
with $|\phi|\leq 1$, for
$0\leq t_1<t_2\leq T$, we have
\begin{equation}\begin{split}
    \big|\int_{E_i^{(m)}(t_2)}\phi(x)\,dx& -\int_{E_i^{(m)}(t_1)}\phi(x)\,dx\big|\leq 
\int_{t_1}^{t_2}\int_{\partial^* E_i^{(m)}(t)}|h(V_t^{(m)}|+|u^{(m)}|\,d\mathcal H^1 dt \\
 &\leq \Cr{vol2}(t_2-t_1)^{\frac12}
 \end{split}
\end{equation}
where $\Cr{vol2}$ can be obtained as in
\eqref{esthu2} and is independent
of $m$. For any $R>0$, we may let $A=B_R\cap E_i^{(m)}(t_2)\setminus E_i^{(m)}(t_1)$ or $B_R\cap E_i^{(m)}(t_1)\setminus E_i^{(m)}(t_2)$ 
and approximate $\chi_A$ by 
$\phi\in C_c(\R^2)$ to see that (after letting $\phi\rightarrow \chi_A$ and $R\rightarrow \infty$) $|E_i^{(m)}(t_1)\triangle E_i^{(m)}(t_2)|\leq 2\Cr{vol2}(t_2-t_1)^{\frac12}$.  From this, one can conclude
that $\chi_{E_i^{(m_j)}(t)}$
converges locally in $L^1(\R^2)$ 
for all $t\in(0,T)$ 
(not only a.e.~$t$), and the limit
$E_i(t)$ is $\frac12$-H\"{o}lder 
continuous with respect to the
Lebesgue measure. If necessary,
we may define the time-slice of 
$S(i)$ to be $E_i(t)$ for all $t\in(0,T)$, and by a diagonal argument as $T\rightarrow \infty$, this proves
the claim of \eqref{concha} and also
(E5). 
\end{proof}
\begin{remark}\label{re1}
    By (E2'), $E_1^{(m_j)}(t),\ldots,E_N^{(m_j)}(t)$ are
    mutually disjoint for each $t\geq 0$. By \eqref{concha}, 
    it follows that $E_1(t),\ldots,E_N(t)$ are 
    mutually disjoint with respect 
    to the Lebesgue measure.
    Also since $\|\nabla\chi_{E_i^{(m_j)}(t)}\|\leq \|V_t^{(m_j)}\|$
 (by (E5')),  $\lim_{j\rightarrow\infty}\|V_t^{(m_j)}\|=\|V_t\|$, and by the lower-semicontinuity property, we have
 $\|\nabla\chi_{E_i(t)}\|\leq \|V_t\|$ for all $t\geq 0$.
 Then, by \cite[Proposition 29.4]{maggi2012sets}, 
 \begin{equation}
     \frac12\sum_{i=1}^N\|\nabla\chi_{E_i(t)}\|=\mathcal H^1\lfloor_{\cup_{i=1}^N\partial^* E_i(t)}\leq \|V_t\|.
 \end{equation}
 At $t=0$, we may argue just as in \cite[Proposition 6.10]{ST19}. If $\mathcal H^1(\cup_{i=1}^N
 \partial E_{0,i}\setminus \partial^* E_{0,i})=0$, then
 \begin{equation}
2\|V_0\|=\sum_{i=1}^N\|\nabla\chi_{E_{0,i}}\|\leq \liminf_{t\rightarrow 0+}\sum_{i=1}^N \|\nabla\chi_{E_i(t)}\|\leq\liminf_{t\rightarrow 0+}2\|V_t\|
\end{equation}
while $\limsup_{t\rightarrow 0+}\|V_t\|\leq \|V_0\|$ follows
from \eqref{bra}. Thus
$\|V_0\|=\lim_{t\rightarrow 0+}\|V_t\|$ and we have
(V2).
 \end{remark}
\begin{proposition}
    Properties (E6) holds true.
\end{proposition}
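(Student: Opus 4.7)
The plan is to apply Cauchy--Schwarz to the exact identity (E7') for the approximate flow $V^{(m_j)}$ and then pass to the limit along the subsequence from Proposition \ref{sel}. At the approximate level, the quantity
\[
\int_{E_i^{(m_j)}(t_2)}\phi(\cdot,t_2)\,dx-\int_{E_i^{(m_j)}(t_1)}\phi(\cdot,t_1)\,dx-\int_{t_1}^{t_2}\int_{E_i^{(m_j)}(t)}\partial_t\phi\,dxdt
\]
equals $\int_{t_1}^{t_2}\int_{\partial^*E_i^{(m_j)}(t)}(h(V_t^{(m_j)})+u^{(m_j)})\cdot\nu_i^{(m_j)}\,\phi\,d\mathcal H^1\,dt$. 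Using $\mathcal H^1\lfloor_{\partial^*E_i^{(m_j)}(t)}\leq\|V_t^{(m_j)}\|$ from (E5'), Cauchy--Schwarz combined with $(a+b)^2\leq 2(a^2+b^2)$ bounds the absolute value of this difference by $(\beta^{(m_j)}([t_1,t_2]))^{1/2}\bigl(\int_{t_1}^{t_2}\int_{\R^2}\phi^2\,d\|V_t^{(m_j)}\|\,dt\bigr)^{1/2}$, where I set $d\beta^{(m_j)}(t):=2\bigl(\int_{\R^2}|h(V_t^{(m_j)})|^2+|u^{(m_j)}|^2\,d\|V_t^{(m_j)}\|\bigr)\,dt$ on $[0,\infty)$. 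Note that this measure does not depend on $i$.

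Next I extract the limiting measure. By \eqref{ga1} and \eqref{gaa1} applied to $V^{(m_j)}$ with $u^{(m_j)}$, together with \eqref{ucon1}--\eqref{uconin}, there is a constant $\Cr{vol}$ depending only on $\Cr{c1}(u,T)$ and $\mathcal H^1(\Gamma_0)$ such that $\beta^{(m_j)}([0,T])\leq\Cr{vol}$ uniformly in $j$. A diagonal procedure in $T\to\infty$ then yields a further subsequence (unrelabeled) and a Radon measure $\beta$ on $[0,\infty)$ with $\beta^{(m_j)}\weakstar\beta$ locally and $\beta([0,T])\leq\Cr{vol}$ for every $T$.

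Finally I pass to the limit in the Cauchy--Schwarz inequality. The left-hand side converges by \eqref{concha} for the endpoint terms $\int_{E_i^{(m_j)}(t_k)}\phi(\cdot,t_k)\,dx$ ($k=1,2$), and by the $L^1_{loc}$ convergence of $\chi_{S^{(m_j)}(i)}$ on $\R^2\times(0,T)$ for the time-derivative term. On the right-hand side, $\phi^2\in C_c(\R^2)$ and $\|V_t^{(m_j)}\|\to\|V_t\|$ as Radon measures for every $t$, so $\int_{\R^2}\phi^2\,d\|V_t^{(m_j)}\|\to\int_{\R^2}\phi^2\,d\|V_t\|$ pointwise in $t$; dominated convergence using the uniform mass bound \eqref{gmass} then passes this to the time integral. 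The only delicate point, and the main obstacle of the argument, is the factor $\beta^{(m_j)}([t_1,t_2])$: since $[t_1,t_2]$ is compact and the measures $\beta^{(m_j)}$ are positive, weak-$*$ convergence yields $\limsup_j\beta^{(m_j)}([t_1,t_2])\leq\beta([t_1,t_2])$, which is precisely the semicontinuity direction required. Taking $\liminf$ on both sides of the approximate estimate and combining these convergences produces the bound claimed in (E6).
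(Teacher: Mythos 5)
Your proof is correct and follows essentially the same route as the paper: define the space-time measure $d\beta^{(m_j)}=2\bigl(\int_{\R^2}|h(V_t^{(m_j)})|^2+|u^{(m_j)}|^2\,d\|V_t^{(m_j)}\|\bigr)dt$, bound its mass uniformly via \eqref{ga1}, \eqref{gaa1}, extract a weak-$*$ limit $\beta$, apply Cauchy--Schwarz to the identity \eqref{gvel}, and pass to the limit. You spell out two details the paper leaves implicit --- the upper semicontinuity $\limsup_j\beta^{(m_j)}([t_1,t_2])\leq\beta([t_1,t_2])$ on the compact interval and the independence of $\beta$ from $i$ --- but the argument is the same.
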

\begin{proof}
 Consider a sequence of $L^1_{loc}([0,\infty))$ functions
\begin{equation}
f^{(m_j)}(t):=2\int_{\partial^* E_i^{(m_j)}(t)}|h(V_t^{(m_j)})|^2+|u^{(m_j)}|^2\,d\|V_t^{(m_j)}\|(x)
\end{equation}
whose $L^1$-norm on $[0,T]$ is
bounded uniformly. By the compactness theorem of Radon measure,
there exists a further subsequence (denoted by the same index) and a
Radon measure $\beta$ on 
$[0,\infty)$ such that $f^{(m_j)}\,dt$ converges to $\beta$ as measures. 
With this notation, for any $\phi\in C_c^1(\R^2\times[0,\infty))$,
\begin{equation}
\int_{t_1}^{t_2}\int_{\partial^* E_i^{(m_j)}(t)}(|h(V_t^{(m_j)})|+|u^{(m_j)}|)\phi\leq \big(\int_{t_1}^{t_2}f^{(m_j)}\,dt\big)^{\frac12}\big(\int_{t_1}^{t_2}
\int_{\R^2}\phi^2\,d\|V_t^{(m_j)}\|\big)^{\frac12}
\end{equation}
where we used (E5'). Then, using 
\eqref{gvel} and the convergence of 
$f^{(m_j)}dt$, $E_i^{(m_j)}(t)$ and $\|V_t^{(m_j)}\|$ to $\beta$,
$E_i(t)$ and $\|V_t\|$ respectively, we obtain
\eqref{gvel2} and this ends the proof. 
\end{proof}
Up to this point, (E1)-(E6) are
proved. Finally,
\begin{proposition}
    The properties (E7) and (E8) hold.
\end{proposition}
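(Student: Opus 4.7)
The plan is to deduce both (E7) and (E8) from the analogous property (E8') already established for the approximating flows $\{V_t^{(m_j)}\}$ and $\{E_i^{(m_j)}(t)\}$ in Theorem \ref{exreg}, by combining the two modes of convergence available: the varifold convergence $V_s^{(m_j')}\to V_s$ at a.e.\ time $s$ (after extracting a further $s$-dependent subsequence as in the proof of Proposition \ref{l2}), and the $L^1_{loc}$ convergence $\chi_{E_i^{(m_j)}(s)}\to\chi_{E_i(s)}$ that holds at \emph{every} $s\ge 0$. The key observation is that, thanks to Proposition \ref{junct}, the local picture around any point of $\cup_{i=1}^{P(s)}\ell_i(s)$ is very rigid: it is a finite ordered family of $W^{2,2}$ graphs over a common line, and these graphs can be realised as the $W^{2,2}$ limit of analogous graphs representing $V_s^{(m_j')}$ for large $j$.

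For (E7), fix such an $s$ and a point $x_0\in \cup_{i=1}^{P(s)}\ell_i(s)$, and set $k:=\theta_s(x_0)\in\Na$. By Proposition \ref{junct} and the argument in its proof, for some $r>0$ and all sufficiently large $j$, $V_s^{(m_j')}$ in $B_r(x_0)$ is free of junctions and can be represented as $k$ ordered graphs $f_1^{(m_j')}\le\dots\le f_k^{(m_j')}\in W^{2,2}$ over a line, converging in $W^{2,2}$ (hence in $C^1$) to graphs $f_1\le\dots\le f_k$ representing $V_s$ in $B_r(x_0)$. By (E8') applied to $V_s^{(m_j')}$, each sheet $\mathrm{graph}(f_l^{(m_j')})$ lies in $\cup_{i=1}^N\partial^* E_i^{(m_j')}(s)$ (respectively in $\partial^* E_1^{(m_j')}(s)=\partial^* E_2^{(m_j')}(s)$ when $N=2$), since each such sheet has unit multiplicity away from junctions. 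Passing to a further subsequence one may assume that the phase label of each of the $k+1$ strips cut out by the sheets in $B_r(x_0)$ is constant in $j$; the $L^1$ convergence $\chi_{E_i^{(m_j')}(s)}\to\chi_{E_i(s)}$ on $B_r(x_0)$ together with the $C^1$ convergence of the sheets then transfers this strip labelling to the limit. For $N\ge 3$ with $k=1$, this directly yields $x_0\in\partial^* E_i(s)$ for some $i$; for $N=2$, the labels alternate between $E_1(s)$ and $E_2(s)$, so the top and bottom strips coincide iff $k$ is even, which is exactly the parity statement of (E7)(2).

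For (E8), assume that $\{V_t\}_{0\le t\le T}$ is of unit density for a.e.\ $t\in [0,T]$. Writing $V_t={\bf var}(M_t,\theta_t)$ with $\theta_t= 1$ for $\|V_t\|$-a.e.\ $x$, we have $\|V_t\|=\Ha^1\lfloor_{M_t}$. Property (E7), applied to $\Ha^1$-a.e.\ point of $M_t$ (either case (1) or case (2) with $k=1$, which is odd), gives $M_t\subseteq \cup_{i=1}^N\partial^* E_i(t)$ up to $\Ha^1$-null sets. Conversely, Remark \ref{re1} furnishes $\Ha^1\lfloor_{\cup_{i=1}^N\partial^* E_i(t)}\le \|V_t\|=\Ha^1\lfloor_{M_t}$, which forces equality of the two measures and hence $V_t={\bf var}(\cup_{i=1}^N\partial^* E_i(t),1)$ for a.e.\ $t\in[0,T]$.

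The main obstacle I anticipate is in the parity step of (E7)(2): one has to confirm that the $C^1$ convergence of the sheets $f_l^{(m_j')}\to f_l$, combined with $L^1_{loc}$ convergence of the $\chi_{E_i^{(m_j')}(s)}$, really produces a stable and well-defined strip labelling at the limit, with no two sheets collapsing in a way that merges adjacent strips and falsifies the alternation. This reduces to checking that the multiplicity $k$ is preserved under the varifold limit on $B_r(x_0)$ and that the $k$ limiting graphs genuinely remain $k$ distinct sheets carrying mass, which follows from the $W^{2,2}$ convergence together with $\theta_s(x_0)=k$, but needs to be written out carefully.
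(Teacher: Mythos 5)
Your proposal is correct and follows essentially the same route as the paper: both deduce (E7) from the local multi-graph structure of Proposition \ref{junct}, the locally $C^1$ convergence of the approximating curves, property (E8') for the approximating flows, and the $L^1$ convergence of the phases, with the same parity/alternation reasoning for $N=2$; and both obtain (E8) by combining unit density with (E7) (your use of Remark \ref{re1} for the reverse inclusion is a minor, equivalent variant of the paper's symmetric-difference statement). Your write-up is in fact somewhat more explicit than the paper's about the strip-labelling step, which is the point the paper leaves implicit.
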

\begin{proof}
This follows from the argument in
Proposition \ref{junct} and
the resulting (V3) as follows.
For a.e.~$t>0$, $V_t=\sum_{i=1}^{P(t)}{\bf var}(\ell_i(t),1)$ and away from a finite number
of junctions, the density of 
$\|V_t\|$ is precisely the number 
of $W^{1,1}$ curves passing 
through that point, all 
tangentially. As in the proof of
Proposition \ref{junct} of case (a), the 
topology of convergence of $W^{2,2}$ curves corresponding
to $V^{(m_j)}$ to the limit is locally
$C^1$, thus the properties (E8') satisfied by $V^{(m_j)}$ holds in the limit. More precisely,
suppose that $N\geq 3$ and the density of $\|V_t\|$
is $1$. Then there is only one curve going through 
this point, thus, the number of curves of $V_t^{(m_j)}$
approaching to this single curve must be also $1$.
Since $E_i^{(m_j)}(t)$ ($i=1,\ldots,N$) converges to 
$E_i(t)$, and (E8')(1) shows that the approaching curves 
are part of the reduced boundary of $E_1^{(m_j)}(t)\,\ldots,E_N^{(m_j)}(t)$, we can deduce that the
point in question must be also in the reduced boundary 
of $E_1(t),\ldots,E_N(t)$. This proves the case of $N\geq 3$. If $N=2$, then, again by the $C^1$ convergence of curves
and the property (E8')(2), one can conclude the same 
property for the limit depending on the parity of 
the density. This ends the proof of (E7). For (E8),
the unit density implies that $\theta_t(x)=1$ for 
$\|V_t\|$ a.e.~$x$ and (E7) then implies that 
$\mathcal H^1(\ell_i(t)\cap \ell_j(t))=0$ for all 
$i\neq j$ and $\mathcal H^1(\cup_{i=1}^{P(t)}\ell_i(t)
\triangle \cup_{i=1}^N \partial^* E_i(t))=0$. This immediately 
proves (E8). 
\end{proof}
\section{Final remarks}
\begin{itemize}
    \item[(1)] One may wonder if there exists a corresponding result for $n>1$ that
is also critical, for example, $q = 2$ and $p= n+ 1$ in view of \eqref{subc}, and some appropriate condition on $u$ itself, such as $\esssup_{t\in[0,T]}\int_{\mathbb R^2}|u|^2\,dx$ in the case of $n=1$. Our analysis is heavily based on
\eqref{Dtest}, and a similar strategy does not seem to work. 
\item[(2)] Suppose that we have a sequence of flows $\{V_t^{(m)}\}_{t\geq 0}$ and $\{u^{(m)}(\cdot,t)\}_{t\geq 0}$ each of which satisfies the properties of Theorem \ref{mainth1} and is  such that  $\|V_0^{(m)}\|(\mathbb R^2)$ and $\Cr{c1}(u^{(m)},T)$ are uniformly bounded for each $T>0$. 
It is then desirable to have a good compactness
property. In the case that $u^{(m)}=0$ (or even the subcritical case treated in \cite{takasao2016existence}), one can 
extract a subsequence (denoted by the same
index) 
and a limit $\{V_t\}_{t\geq 0}$ such 
that $\lim_{m\rightarrow\infty}\|V_t^{(m)}\|=\|V_t\|$ for all $t\geq 0$ and the limit is a
Brakke flow (or flow with \eqref{prob}). 
In the present case, if we can further assume
that $u^{(m)}$ converges to some $u$ strongly, namely,
\begin{equation}
    \lim_{m\rightarrow\infty}\int_0^T\int_{\mathbb R^2}|\nabla u^{(m)}-\nabla u|^2\,dxdt=0,
\end{equation}
then one can extract a subsequence such that
the limit flow is a solution of \eqref{prob}.
The proof of this may be carried out along the line of the proof of Theorem \ref{mainth1} in fact, since the proof is 
precisely the compactness of the approximate flows to the solution of \eqref{prob}. On the other hand, unless we assume this strong convergence, it is difficult to prove that the limit is a solution of \eqref{prob}. 
\end{itemize}
\appendix
\section{Proof of Theorem \ref{exreg}}\label{append}

    For a vector field $u\in C^1_c(\mathbb R^{n+1}\times[0,\infty))$, we need to change the motion law from $v=h$ to $v=h+u^\perp$ in \cite{kim2017mean,Kim-Tone2,ST-canonical} and we point out locations to be changed. First of all, let $N\in\mathbb N$ be the 
    number of phases as in 
    these papers, and with $\mathcal H^n(\Gamma_0)<\infty$, we may set the weight function  (see \cite[Section 3.1]{kim2017mean}) $\Omega\equiv 1$ and $c_1=0$. Otherwise, we may proceed with the same definitions in \cite{kim2017mean}, with 
    a slight modification provided in 
    \cite{ST-canonical} in the 
    definition of the volume-controlled Lipschitz deformation (see \cite[Definition 3.1]{ST-canonical}).
    The first major modification occurs 
    at \cite[Proposition 5.7]{kim2017mean}. The function $f$
    should be changed
    from $f(x):=x+h_{\varepsilon}(x,V)\Delta t$ to $$f(x):=x+(h_{\varepsilon}(x,V)+u(x))\Delta t$$ (where $u(x)=u(x,t)$ with $t$ specified later) and the estimates \cite[(5.55)]{kim2017mean} and \cite[(5.56)]{kim2017mean} become
    \begin{equation}\label{5.55}
        \Big|\frac{\|f_{\sharp} V\|(\phi)-\|V\|(\phi)}{\Delta t} -\delta(V,\phi)(h_{\varepsilon}(\cdot, V)+u(\cdot))\Big|\leq \varepsilon^{c_2-10},
    \end{equation}
    \begin{equation}\label{5.56}
      \frac{\|f_{\sharp} V\|(\mathbb R^{n+1})-\|V\|(\mathbb R^{n+1})}{\Delta t}+\frac12\int_{\mathbb R^{n+1}}
      \frac{|\Phi_{\varepsilon}\ast\delta V|^2}{\Phi_{\varepsilon}\ast\|V\|+\varepsilon}\,dx
      \leq \varepsilon^{\frac14} +\|u\|_{C^1}\|V\|(\mathbb R^{n+1}),
    \end{equation}
 and under the assumption of $\|f_\sharp V\|(\mathbb R^{n+1})\leq M$, \cite[(5.57)]{kim2017mean} becomes
 \begin{equation}\label{5.57}
     |\delta(V,\phi)(h_{\varepsilon}(\cdot, V)+u(\cdot))-\delta(f_{\sharp}V,\phi)(h_{\varepsilon}(\cdot,f_{\sharp}V)+u(\cdot))|
     \leq \varepsilon^{c_2-2n-19},
 \end{equation}
 and the same expression for \cite[(5.58)]{kim2017mean} with $\Omega=1$. Here $c_2:=3n+20$ (see \cite[(5.54)]{kim2017mean}). The proof of 
 \eqref{5.55} is the same, in essence because of the boundedness of $u$ in $C^1$ (to be precise, 
 the restriction on $\varepsilon$ also depends on $\|u\|_{C^1
}$). To obtain \eqref{5.56}, we may proceed as in \cite[(5.66)]{kim2017mean} with 
 \begin{align}\label{5.66}
     \delta(V,1)(h_\varepsilon+u)&=\delta V(h_\varepsilon)+\delta V(u)\nonumber\\
     &\leq -(1-\varepsilon^{\frac14})\int_{\mathbb R^{n+1}}
     \frac{|\Phi_{\varepsilon}\ast\delta V|^2}{\Phi_{\varepsilon}\ast\|V\|+\varepsilon}\,dx
     +\varepsilon^{\frac14}+\|u\|_{C^1}\|V\|(\mathbb R^{n+1}),
 \end{align}
 where we used \cite[(5.23)]{kim2017mean} and the 
 definition of $\delta V$. Combining \eqref{5.55}
 with $\phi\equiv 1$,
 \eqref{5.66} gives \eqref{5.56}. The same proof works for \eqref{5.57} and the one corresponding to \cite[(5.58)]{kim2017mean} by using the boundedness of $u$ in $C^1$. 
 The content of \cite[Section 6]{kim2017mean} needs only a
 slight change due to $u$, for example, in \cite[Proposition 6.1]{kim2017mean}, we have $\mathcal E_0:=\{E_{0,1}, \ldots,E_{0,N}\}\in \mathcal{OP}^N_1$, and there exists a family $\mathcal E_{j,l}\in\mathcal{OP}^N_1$ ($l=0,1,\ldots,j 2^{p_j}$) for each $j\in\mathbb N$ with
 \begin{equation}
     \mathcal E_{j,0}=\mathcal E_0 \mbox{ for all }j\in\mathbb N
 \end{equation}
 and with the notation of (see also \cite[(5.54)]{kim2017mean})
 \begin{equation}
     \Delta t_j:=\frac{1}{2^{p_j}}\in (2^{-1} \varepsilon_j^{c_2},\varepsilon_j^{c_2}],
 \end{equation}
 we have (cf.~\cite[(6.3)-(6.5)]{kim2017mean})
 \begin{equation}\label{6.3}
     \|\partial\mathcal E_{j,l}\|(\mathbb R^{n+1})
     \leq \mathcal H^1(\Gamma_0)\exp\Big(l\Delta t_j\|u\|_{C^1}\Big)+\varepsilon_j^{\frac18}l\Delta t_j,
 \end{equation}
 \begin{equation}\label{6.4}
 \begin{split}
     &\frac{\|\partial\mathcal E_{j,l}\|(\mathbb R^{n+1})-
     \|\partial\mathcal E_{j,l-1}\|(\mathbb R^{n+1})}{\Delta t_j}
     +\frac12
     \int_{\mathbb R^{n+1}}\frac{|\Phi_{\varepsilon_j}\ast
     \delta(\partial\mathcal E_{j,l})|^2}{\Phi_{\varepsilon_j}
     \ast\|\partial\mathcal E_{j,l}\|+\varepsilon_j}\,dx \\
&     -\frac{1-j^{-5}}{\Delta t_j}\Delta_j\|\partial\mathcal E_{j,l-1}\|(\mathbb R^{n+1})\leq \varepsilon_j^{\frac18}
     +\|u\|_{C^1}\|\partial\mathcal E_{j,l-1}\|(\mathbb R^{n+1}),
 \end{split}
 \end{equation}
 \begin{equation}\label{6.5}
     \frac{\|\partial\mathcal E_{j,l}\|(\phi)-\|\partial\mathcal E_{j,l-1}\|(\phi)}{\Delta t_j}
     \leq \delta(\partial\mathcal E_{j,l},\phi)(h_{\varepsilon_j}(\cdot,\partial\mathcal E_{j,l})+
     u(\cdot, l\Delta t_j))+\varepsilon_j^{\frac18}
 \end{equation}
 for $l=1,2,\ldots,j2^{p_j}$ and $\phi\in \mathcal A_j$. The proof of these is very similar. Starting
 with \cite[(6.6)]{kim2017mean}, we put
 \begin{equation}
     M_j:=\mathcal H^n(\Gamma_0)\exp\big(j\|u\|_{C^1}\big)+1,
 \end{equation}
 and choose $\varepsilon_j$ similarly so that 
 corresponding to this $M_j$, \cite[(6.7)]{kim2017mean} holds. Then choose an admissible 
 Lipschitz map $f_1\in {\bf E}^{vc}(\mathcal E_{j,l},j)$ (\cite[Definition 3.1]{ST-canonical}) as in \cite[(6.9)]{kim2017mean}, define $\mathcal E_{j,l+1}^*:=(f_1)_{\star}\mathcal E_{j,l}$ and define 
 \begin{equation}
     f_2(x):=x+\Delta t_j (h_{\varepsilon_j}(x,
     \partial\mathcal E_{j,l+1}^*)+u(x,(l+1)\Delta t_j)).
 \end{equation}
 Then we define 
 \begin{equation}
     \mathcal E_{j,l+1}:=(f_2)_{\star}\mathcal E_{j,l+1}^*.
 \end{equation}
 The rest of the proof is identical and one can deduce \eqref{6.3}-\eqref{6.5}. 
 The statement of \cite[Proposition 6.4]{kim2017mean} is the same in that there exists 
 a subsequence $\{j_l\}_{l\in\mathbb N}$ such that (see \cite[(6.18)-(6.20)]{kim2017mean})
 \begin{equation}
     \lim_{l\rightarrow\infty} \|\partial\mathcal E_{j_l}(t)\|(\phi)=\mu_t(\phi)
 \end{equation}
 for all $t\geq 0$ and for all $\phi\in C_c(\mathbb R^{n+1})$, and for all $T<\infty$
 \begin{equation}\label{lb1}
     \limsup_{i\rightarrow\infty} 
     \int_0^T\Big(\int_{\mathbb R^{n+1}}
     \frac{|\Phi_{\varepsilon_{j_l}} \ast\delta(\partial\mathcal E_{j_l}(t))|^2}{\Phi_{\varepsilon_{j_l}} \ast\|\partial\mathcal E_{j_l} (t)\|+
     \varepsilon_{j_l}}\,dx
     -\frac{1}{\Delta t_{j_l}} \Delta_{j_l}^{vc}\|\partial\mathcal E_{j_l}(t)\|(\mathbb R^{n+1})\Big)dt<\infty.
 \end{equation}
 Moreover, for a.e.~$t\in[0,\infty)$, we have
 \begin{equation}\label{lb2}
\lim_{l\rightarrow\infty}j_l^{2(n+1)}\Delta_{j_l}^{vc}\|\partial\mathcal E_{j_l}(t)\|(\mathbb R^{n+1})=0.
\end{equation}
The proofs of rectifiability and integrality 
\cite[Section 7, 8]{kim2017mean} (with 
slight modification as in \cite{ST-canonical}) utilize 
only the properties \eqref{lb1} and \eqref{lb2}, so that the measure $\mu_t$ 
is integral for a.e.~$t\geq 0$. The last part for the proof of 
\eqref{bra-n} is similar to that in \cite[Section 9]{kim2017mean} in that 
we have the extra term coming from $u$ in the right-hand side. Tracing the proof of \cite[Theorem 9.3]{kim2017mean}, it comes down (see \cite[(9.10)]{kim2017mean}) to analyzing the
behavior of (with $\partial\mathcal E_{j_l}=\partial\mathcal E_{j_l}(t)$)
\begin{equation}
    \delta(\partial\mathcal E_{j_l},\hat\phi)(h_{\varepsilon_{j_l}}+u_{j_l})
    =\delta(\partial\mathcal E_{j_l})(\hat\phi (h_{\varepsilon_{j_l}}+u_{j_l}))+
    \int_{{\bf G}_1(\mathbb R^{n+1})} S^\perp(\nabla\hat\phi)\cdot (h_{\varepsilon_{j_l}}+u_{j_l})\,d(\partial\mathcal E_{j_l})
\end{equation}
as $l\rightarrow\infty$ for a.e.~$t$. Here $u_{j_l}$
is a piece-wise constant (only in time-direction) vector field such that 
\begin{equation}
    u_{j_l}(x,t)=u(x,k\Delta t_{j_l})\mbox{ if }
    t\in ((k-1)\Delta t_{j_l},k\Delta t_{j_l}].
\end{equation}
Since $u\in C^1_c(\mathbb R^{n+1}\times[0,\infty);\mathbb R^{n+1})$, $u_{j_l}$ and $\nabla u_{j_l}$ converges
to $u$ and $\nabla u$ uniformly as $l\rightarrow\infty$. Now, the term $\delta(\partial\mathcal E_{j_l},\hat\phi)(u_{j_l})$
 is bounded uniformly (in terms of $\|u\|_{C^1}$ and 
 $\|\partial\mathcal E_{j_l}\|(\mathbb R^{n+1})$). Thus,
 in choosing a subsequence $\{j'_l\}_{j\in\mathbb N}\subset\{j_l\}_{l\in\mathbb N}$ 
 in \cite[(9.16)]{kim2017mean} such that
 \begin{equation}
   \limsup_{l\rightarrow\infty} \delta(\partial\mathcal E_{j_l}(t),\hat\phi)(h_{\varepsilon_{j_l}}+u_{j_l})=\lim_{l\rightarrow\infty} \delta(\partial\mathcal E_{j'_l}(t),\hat\phi)(h_{\varepsilon_{j'_l}}+u_{j'_l})  
 \end{equation}
 and proceeding similarly, one can prove that
 (\cite[(9.19)]{kim2017mean})
 \begin{equation}
     \limsup_{l\rightarrow\infty}\int_{\mathbb R^{n+1}}
     \frac{|\Phi_{\varepsilon_{j'_l}}\ast\delta(\partial\mathcal E_{j'_l}(t))|^2}{\Phi_{\varepsilon_{j'_l}}\ast \|\partial\mathcal E_{j'_l}(t)\|+\varepsilon_{j'_l}}\,dx\leq i\tilde M(t).
 \end{equation}
Here $\tilde M(t)$ is as defined in \cite[(9.18)]{kim2017mean} and is finite for a.e.~$t$. But for this
subsequence, $\{\partial\mathcal E_{j'_l}(t)\}_{l\in\mathbb N}$ converges to
$V_t\in {\bf IV}_n(\mathbb R^{n+1})$ as varifolds due to
\cite[Lemma 9.1(b)]{kim2017mean}, and one can 
conclude that
\begin{equation}
    \lim_{l\rightarrow\infty}\delta(\partial\mathcal E_{j'_l}(t),\hat \phi)(u_{j'_l})=\delta(V_t,\hat\phi)(u).
\end{equation}
The harder part on the convergence of $\delta(\partial\mathcal E_{j'_l},\hat\phi)(h_{\varepsilon_{j'_l}})$ is identical and one can
prove \eqref{bra-n} for a time-independent test function $\phi$, 
and one can proceed similarly for a time-dependent 
test function $\phi$ as well. 
This proves the existence of measures 
$\{V_t\}_{t\geq 0}$ satisfying (V1'), (V3') (for now
only $V_t\in{\bf IV}_n(\R^{n+2})$ a.e.$t$), (V4')-(V7'). 
As for (V2'), the same argument
in Remark \ref{re1} (or \cite[Proposition 6.10]{ST19}) shows
the claim, while (V3') in the case of $n=1$ 
is proved 
in \cite{Kim-Tone2}, where the only 
ingredients needed for the proof were \eqref{lb1}
and \eqref{lb2}. 

For $\{E_i(t)\}_{t\geq 0}$ we need modifications in \cite[Section 10]{kim2017mean} and \cite{ST-canonical}. For the first one, the main point is that
one needs to modify Huisken's monotonicity formula \cite{Huisken_mono} appropriately
due to the presence of $u$, which can be
done easily when $u$ is bounded. In
\cite[(10.2)]{kim2017mean}, for $t\in[0,T]$,
we replace 
$\hat{\rho}^R_{(y,s)}(x,t)$ by
\begin{equation}
    \hat\rho^R_{(y,s)}(x,t):=\eta\Big(\frac{x-y}{R}\Big)\exp\big(-t\|u\|_{L^\infty}^2\big)\rho_{(y,s)}(x,t).
\end{equation}
Then, proceeding as in \cite[Proposition 6.2]{kasai2014general} (see the computations up to \cite[(6.6)]{kasai2014general}), one can conclude
that for $0\leq t_1<t_2\leq T$,
\begin{equation}
\|V_t\|(\hat{\rho}^R_{(y,s)}(\cdot,t))\Big|_{t=t_1}^{t_2}
\leq c(n) R^{-2}(t_2-t_1)\sup_{t'\in[t_1,t_2]}
R^{-n}\|V_{t'}\|(B_{2R}(y)).
\end{equation}
which allows the same argument in \cite[Section 10]{kim2017mean}. Another minor modification is 
that of \cite[Lemma 10.12]{kim2017mean}.
With additional bounded $u$, one can prove:
\begin{lemma}
    For some $t\in \mathbb R^+$, $x\in \mathbb R^{n+1}$ and $r>0$, suppose $\|V_t\|(U_r(x))=0$. Then for $t'\in [t,t+\frac{r^2}{2n+2r\|u\|_{L^\infty}}]$, we have
    $\|V_{t'}\|(U_{\sqrt{r^2-(2n+2r\|u\|_{L^\infty})(t'-t)}}(x))=0$.
\end{lemma}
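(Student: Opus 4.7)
My approach is the classical shrinking-ball test function argument of Brakke, adapted to the presence of the forcing term. Set $A := 2n + 2r\|u\|_{L^\infty}$ and $\rho(s)^2 := r^2 - A(s-t)$ for $s\in[t, t+r^2/A]$, and fix an integer $p\geq 3$. I would take as test function
\begin{equation*}
\phi(y, s) := \bigl(\rho(s)^2 - |y-x|^2\bigr)_+^p,
\end{equation*}
which lies in $C^2_c(\R^{n+1}\times[t, t+r^2/A])$, is nonnegative, and has support equal to $\overline{U_{\rho(s)}(x)}$ at time $s$. In particular, $\phi(\cdot, t)$ is supported in $\overline{U_r(x)}$, so the hypothesis $\|V_t\|(U_r(x))=0$ forces $\|V_t\|(\phi(\cdot, t)) = 0$. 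Applying the Brakke inequality \eqref{bra-n} on $[t, s]$ then reduces the task to estimating from above the space-time integrand $\partial_\tau\phi + (\nabla\phi - \phi h)\cdot(h+u^\perp)$ appearing on its right-hand side.

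Writing $f := \rho^2 - |y-x|^2$, one has $\partial_s \phi = -pAf^{p-1}$ and $\nabla\phi = -2p(y-x)f^{p-1}$. For the $h$-linear contribution I would invoke the first variation identity (valid since $\phi \in C^2_c$ and $\|\delta V_\tau\|\ll\|V_\tau\|$ for a.e.\ $\tau$):
\begin{equation*}
\int\nabla\phi\cdot h\,d\|V_\tau\| \;=\; -\int\nabla^2\phi : S\,dV_\tau(x,S),\qquad \nabla^2\phi : S \;=\; 4p(p-1)f^{p-2}|S(y-x)|^2 - 2npf^{p-1}.
\end{equation*}
For the $u$-cross terms I would use Young's inequality $\phi|h||u|\leq \tfrac12\phi|h|^2+\tfrac12\phi|u|^2$ combined with the uniform support bound $|y-x|\leq r$, which gives $|\nabla\phi||u|\leq 2prf^{p-1}\|u\|_{L^\infty}$. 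The crucial algebraic point is that, upon collecting all contributions, the coefficient of $\int f^{p-1}\,d\|V_\tau\|$ comes out to
\begin{equation*}
-pA + 2np + 2pr\|u\|_{L^\infty},
\end{equation*}
which vanishes \emph{precisely} because of the choice $A = 2n + 2r\|u\|_{L^\infty}$. The remaining nonpositive terms $-4p(p-1)\int f^{p-2}|S(y-x)|^2\,dV_\tau$ and $-\tfrac12\int\phi|h|^2\,d\|V_\tau\|$ are discarded, leaving only the Young residue $\tfrac12\|u\|_{L^\infty}^2\int\phi\,d\|V_\tau\|$.

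Combining, the Brakke inequality delivers
\begin{equation*}
\|V_s\|(\phi(\cdot, s)) \;\leq\; \tfrac12\|u\|_{L^\infty}^2\int_t^s \|V_\tau\|(\phi(\cdot, \tau))\,d\tau,
\end{equation*}
and since the initial datum at $\tau=t$ vanishes, Gronwall's inequality forces $\|V_s\|(\phi(\cdot,s)) = 0$ throughout $[t, t+r^2/A]$. Strict positivity of $\phi(\cdot, s)$ on $U_{\rho(s)}(x)$ then yields the claim. The main obstacle, and the essential algebraic observation, is the exact balance in the choice of $A$: the classical unforced shrinkage rate $2n$ (coming from the tangential Laplacian applied to $|y-x|^2$) must be augmented by exactly $2r\|u\|_{L^\infty}$, which is precisely what is needed to absorb the new cross term $|\nabla\phi||u|$ on a support of radius at most $r$. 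This dictates the enlargement $A = 2n + 2r\|u\|_{L^\infty}$ of the shrinkage constant in the statement, compared with the unforced Kim--Tonegawa lemma.
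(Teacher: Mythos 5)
Your argument is correct and is essentially the paper's own proof: the same shrinking-ball test function $(\rho(s)^2-|y-x|^2)_+^p$ (the paper takes $p=4$), the same use of the first variation identity to turn $\nabla\phi\cdot h$ into $-\nabla^2\phi:S$, the same Cauchy--Schwarz/Young treatment of the cross terms, and the same exact cancellation dictating $A=2n+2r\|u\|_{L^\infty}$. The only cosmetic difference is that the paper absorbs the residual $\tfrac12\|u\|_{L^\infty}^2\phi$ term by building the damping factor $\exp(-t\|u\|_{L^\infty}^2)$ into the test function so that $\|V_t\|(\phi)$ is monotone, whereas you leave it and close with Gronwall; both are fine.
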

For completeness, we give the proof.
\begin{proof}
    Without loss of generality, assume $x=0$
    and $t=0$.
    Define $$\phi(x,t):=\{r^2-|x|^2-(2n+2r\|u\|_{L^\infty})t\}^4 \exp(-t\|u\|_{L^\infty}^2)$$
    for $|x|^2\leq r^2- (2n+2r\|u\|_{L^\infty})t$ and $=0$
    otherwise. Then $\phi$ is $C_c^2$ in 
    the space variables and by the
    Cauchy-Schwarz inequality and the first
    variation formula \eqref{tonegawakimequ2.2a}, we have
    \begin{equation}\label{int1}
        \int_{\mathbb R^{n+1}}
        (\nabla\phi-h\phi)\cdot(h+u^\perp)\,
        d\|V_t\|\leq \int_{G_n(\mathbb R^{n+1})}
        \frac12|u|^2\phi-S\cdot\nabla^2\phi+|u||\nabla\phi|\,dV_t(\cdot,S).
    \end{equation}
    Writing above $\phi$ as $R(x,t)^4\exp(-t\|u\|^2_{L^\infty})$, one can check that (also using $S\cdot I=n$)
    \begin{equation*}
        S\cdot \nabla^2\phi=(48 R^{-2}|S(x)|^2
        -8nR^{-1} )\phi
    \end{equation*}
    so that \eqref{int1} can be bounded from above
    by 
\begin{equation*}
  \int_{\mathbb R^{n+1}} \frac{\|u\|_{L^\infty}^2}{2}\phi+8n R^{-1}\phi
  +\|u\|_{L^\infty}8r R^{-1}\phi\,d\|V_t\|
\end{equation*}
while 
\begin{equation*}
    \frac{\partial \phi}{\partial t}
    =-\|u\|_{L^\infty}^2\phi-8(n+r\|u\|_{L^\infty})R^{-1}\phi.
\end{equation*}
Thus \eqref{bra-n} shows that $\|V_t\|(\phi)$ is decreasing in time
and we obtain the desired conclusion.
\end{proof} 
We can proceed exactly as in \cite[Section 10]{kim2017mean} and may conclude (E1')-(E6'). 
For (E7') and (E8'), we need to modify the proof in 
\cite{ST-canonical}. Since $u$ is $C^1$,
the proof is similar. In \cite[Section 4.1]{ST-canonical}, $h+u^\perp$ in place of
$h$ is a generalized velocity in the 
sense of $L^2$ flow and the proof is 
identical once \eqref{bra-n} is established.
The proof on the existence of measure-theoretic velocities (\cite[Section 4.2]{ST-canonical}) can be easily modified by replacing the motion law from $h_\varepsilon$ to $h_\varepsilon+u$ and 
the same proof shows \cite[Proposition 4.5]{ST-canonical} (note that $u_i$ in the 
statement there is not to be confused with $u$ in this paper). The proof for the identification of the measure-theoretic velocity in \cite[Section 4.3]{ST-canonical} goes verbatim and proves \eqref{gvel}. The rest of \cite{ST-canonical}, Section 5 and 6, goes without 
change and in particular, (E8') holds true for this modified version.

\end{document}